\renewcommand{\Re}{\tmop{Re}}
\renewcommand{\Im}{\tmop{Im}}
\renewcommand {\a}{\alpha}
\renewcommand {\b}{\beta}
\declaretheoremstyle[bodyfont=\normalfont]{noncursive}
\declaretheorem{theorem}
\declaretheorem[numberwithin=section]{lemma}
\declaretheorem[numberlike=lemma]{proposition}
\declaretheorem[style=noncursive,numberlike=lemma]{definition}
\declaretheorem[style=noncursive,numberlike=lemma]{remark}
\declaretheorem[style=noncursive,numberlike=lemma]{convention}
\renewcommand{\Re}{\mathop{\rm Re}\nolimits}
\renewcommand{\Im}{\mathop{\rm Im}\nolimits}
\newcommand{\im}{\ensuremath{\mbox{\rm Im}\,}}
\newcommand{\re}{\ensuremath{\mbox{\rm Re}\,}}
\newcommand{\zz}{\ensuremath{\zeta}}
\def\1#1{\overline{#1}}
\def\2#1{\widetilde{#1}}
\def\3#1{\widehat{#1}}
\def\4#1{\mathbb{#1}}
\def\5#1{\frak{#1}}
\def\6#1{{\mathcal{#1}}}
\newcommand{\CC}[1]{\mathbb{C}^{#1}}
\newcommand{\RR}[1]{\mathbb{R}^{#1}}
\newcommand{\dw}{\frac{\partial}{\partial w}}
\newcommand{\dz}{\frac{\partial}{\partial z}}
\newcommand{\z}{\zeta}
\newcommand{\lr}{\longrightarrow}
\newcommand{\g}{\mathfrak{g}}
\numberwithin{equation}{section}
\def\Label#1{\label{#1}}
\title[A complete normal form for everywhere Levi--degenerate hypersurfaces]{A complete normal form for everywhere Levi--degenerate hypersurfaces in $\CC{3}$}
\author {M. Kol\'a\v r}
\address{Department of Mathematics and Statistics, Masaryk University, Brno}
\email{mkolar@math.muni.cz}
\author {I. Kossovskiy}\thanks{The research of M. Kol\'a\v r and I.~Kossovskiy was supported by the Czech Grant Agency (GACR) grants GA17-19437S and GA21-09220S. I.\,Kossovskiy was also supported by the Austrian Science Fund (FWF) grant P29468 and P34368}
\address{\parbox{0.8\linewidth}{%
        Department of Mathematics and Statistics, Masaryk University, Brno/\\ %
        Institute of Discretet  Mathematics and Geometry, Technical University of Vienna}
    }
\email{kossovskiyi@math.muni.cz, kossovi3@univie.ac.at}
\keywords{CR-manifolds, normal forms, automorphism group, holomorphic mappings} 
\subjclass[2000]{32H40, 32V25}
\begin{document}
\date{\today}
\maketitle

\begin{abstract}
$2$-nondegenerate real hypersurfaces in complex manifolds play an important role in CR-geometry and the theory of Hermitian Symmetric Domains. In this paper, we obtain a complete convergent normal form for everywhere $2$-nondegenerate real-analytic hypersurfaces in complex $3$-space. We do so by entirely reproducing the Chern-Moser theory in the $2$-nondegenerate setting. This seems to be the first such construction for hypersurfaces of {\em infinite Catlin multitype}. We in particular discover {\em chains} in an everywhere $2$-nondegenerate hypersurface, the tangent lines to which at a point form the so-called {\em canonical cone}. Our approach is based on using a {\em rational (nonpolynomial) model} for everywhere $2$-nondegenerate hypersurfaces,
 which is the local realization due to Fels-Kaup of the well known {\em tube over the light cone}. For the convergence of the normal form, we use an argument due to Zaitsev, based on building 
 %the so-called {\em Zaitsev's 
 a canonical direction field in an appropriate bundle over a hypersurface. 

As an application, we obtain, in the spirit of Chern-Moser theory, a criterion for the {\em local sphericity} (i.e. local equivalence to the model) for a $2$-nondegenerate hypersurface in terms of its normal form. 
As another  application, we obtain an explicit description of  the moduli space of everywhere $2$-nondegenerate hypersurfaces. 
%For the latter problem, we show that it reduces to a certain ``mixed''\, Cauchy problem with the Cauchy data on a ``cross''.  
\end{abstract}
\date{\today}
\tableofcontents

\section{Introduction}

\subsection{Historic outline}

Understanding invariants and symmetries of real hypersurfaces in complex space is one of the central goals in Several Complex Variables. The study of them was initiated
in the seminal 1907 work of Poincar\'e \cite{poincare} who discovered the finite-dimensionality of the local automorphism group for nondegenerate real hypersurfaces,
and further, the non-triviality of the holomorphic mapping problem (which is due to existence of {\em local holomorphic invariants} of hypersurfaces). 

Under the assumption of the {\em Levi-nondegeneracy} of a real hypersurface, a complete picture of invariants and symmetries has been provided in the well known works 
of Cartan \cite{cartan}, Tanaka \cite{tanaka}, Chern and Moser \cite{chern}. The differential-geometric constructions of Cartan, Tanaka, and Chern are certain developments 
of Cartan's moving frame method. They provide a solution to the problem via presenting a canonical frame on a certain bundle associated with a hypersurface.
In contrast, Moser uses an approach inspired by Dynamical Systems and provides a {\em convergent normal form} for a hypersurface. Such a normal form 
provides a distinguished choice of local holomorphic coordinates for a hypersurface, in which its defining equation is approximated "as close as possible"\, 
by  that for the {\em local quadratic model}: a real hyperquadric   
\begin{equation}\Label{quadric}
\im w=Q(z,\bar z),\quad (z,w)\in\CC{n}\times\CC{}
\end{equation}
(where $Q(z,\bar z)$ is a nondegenerate Hermitian form on $\CC{n}$). A biholomorphic transformation bringing a hypersurface to a normal form at a point is  defined uniquely, up to the automorphism group of the model \eqref{quadric}. In this sense, Moser's normal form is {\em complete}. Symmetries of a hypersurface can be read subsequently from the constructed normal form (see e.g. Beloshapka \cite{belold} and Kruzhilin-Loboda \cite{krlo}).   For recent developments in the Chern-Moser theory in high codimension see the work \cite{ls} of Lamel-Stolovitch.  

The problem of describing invariants and symmetries of a real hypersurface appears to be much more difficult
when a hypersurface is {\em Levi-degenerate} at the reference point $p\in M$. A powerful approach here is coming from the
notion of {\em Catlin multitype} of a real hypersurface \cite{catlin} at a point, which is often used in the theory of subelliptic estimates.
For hypersurfaces of finite Catlin multitype at a point, an approach of studying  symmetries and normal forms (based on using higher degree polynomial models) was developed by Kol\'a\v r-Meylan-Zaitsev \cite{kmz}
(see also the subsequent work of Kol\'a\v r-Meylan \cite{km}). It is also notable that a hypersurface of finite Catlin multitype is "good"\, in 
that it still contains Levi-nondegenerate points. More precisely, a generic point in a hypersurface is Levi-nondegenerate,
and this allows to read a lot of information on the CR-structure in a neighborhood of the degeneracy point from the nearby nondegenerate points. 

Probably one of the most intriguing phenomena in CR-geometry is the existence of {\em everywhere Levi-degenerate} real hypersurfaces in complex space of dimension $3$ and higher, which still posses a {\em finite-dimensional} local automorphism group. The relevant nondegeneracy concepts for such hypersurfaces are due to Freeman \cite{freeman} ({\em Freeman sequences}), Stanton and Baoeundi-Ebenfelt-Rothschild \cite{stanton2,beralg} ({\em holomorphic nondegeneracy}), and Baouendi-Huang-Rothschild \cite{bhr} ({\em finite nondegeneracy}). We refer to the book of Baouendi-Ebenfelt-Rothschild for 
%precise definitions and 
further details. For the hypersurfaces under discussion, the situation changes dramatically compared to the finite multitype case: since there are no Levi-nondegenerate points in $M$ {\em at all}, Cartan-Tanaka-Chern-Moser invariants at nearby points  {\em can not} be used anymore for describing invariants at a given point $p\in M$ of Levi-degeneracy.  Besides of that, everywhere $k$-nondegenerate hypersurfaces with $k\geq 2$  have {\em infinite} Catlin mutitype.

In this paper, we address an important class of infinite Catlin multitype hypersurfaces which is the class of {\em everywhere $2$-nondegenerate hypersurfaces}. The latter can be  considered, in a certain sense, as the first possible  nondegeneracy class within the category of infinite multitype hypersurfaces.
For a hypersurface $M$ sitting in $\CC{N},\,N\geq 3$, the everywhere $2$-nondegeneracy means the validity of the following two conditions. First of all, the Levi form of $M$ has everywhere 
rank $N-2$ (and hence the distribution $K$ of pointwise Levi kernels $K_p\subset T^{\CC{}}_pM$ is well defined). 
Once the first condition is satisfied, the second one can be formulated in the language of local sections of the complex tangent bundle, namely, it requires:
\begin{equation}\Label{2nondeg}
[K,T^{\CC{}}M]\not\subset K.
\end{equation}
For an alternative definition of $2$-nondegeneracy we refer to \cite{bhr} and \cite{ber}. 
We note that hypersuraces satisfying (on an open subset) the first condition but not the second are {\em holomorphically degenerate} and hence their automorphism groups are infinite-dimensional (see \cite{ber}).         

The simplest example of a $2$-nondegenerate hypersurface  is the well known {\em tube over the light cone} 
\begin{equation}\Label{tube}
C=\left\{(z_1,z_2,z_3)\in\CC{3}:\,\,y_1^2+y_2^2=y_3^2,\quad y_j=\im z_j, \,\, y_3 \neq 0\right\}\subset\CC{3}.
\end{equation}
This hypersurface has everywhere Levi rank 1 (and hence a $1$-dimensional Levi kernel) and is everywhere $2$-nondegenerate. This hypersurface has the "large"\, but still finite-dimensional automorphism group $SO(3,2)$. Notably, it is {\em holomorphically homogeneous} (that is, its automorphism group acts transitively on it).

The example of the hypersurface $C$ in \eqref{tube} motivated the long-lasting and somewhat intriguing study of the class $\mathcal C_{2,1}$ of real hypersurfaces $M\subset\CC{3}$ which are, in the same manner as $C$, {\em everywhere $2$-nondegenerate} (and hence have everywhere Levi rank one and a finite-dimensional local automorphism group, see e.g. \cite{ber}). Earlier work here includes the work of Ebenfelt \cite{ebenfeltC3}, where a partial (formal) normal form for  real-analytic hypersurfaces $\mathcal C_{2,1}$ was obtained (notably, this normal form already provides a finite-dimensional reduction of the equivalence problem);  the work of Ebenfelt \cite{ebenfeltduke}, where a canonical frame for a smooth $\mathcal C_{2,1}$ hypersurface $M$ was provided under certain additional assumptions; the work of Beloshapka \cite{belC3}, where the optimal dimension bound for the (full) automorphism group of a real-analytic $\mathcal C_{2,1}$ hypersurface was obtained; the work of Kaup-Zaitsev \cite{kaupzaitsev}, where the full automorphism group of the tube \eqref{tube} was finally established correctly (as a Lie group); the work of Fels-Kaup \cite{kaup2}, where the useful local rational model
\begin{equation}\Label{cone}
v=P(z,\zz,\bar z,\bar\zz),\quad \,\,\mbox{where}\,\,\,\,P(z,\zz,\bar z,\bar\zz):=\frac{|z|^2+\re(z^2\bar\zz)}{1-|\zz|^2}, \quad (z,\z,w=u+iv)\in\CC{3},
\end{equation}
 for the tube \eqref{tube} was discovered and the automorphism group of it was subsequently computed explicitely; finally, the work of Fels-Kaup \cite{kaup}, 
 %({\em Acta Math., 2008}),
 where locally homogeneous hypersurfaces of class $\mathcal C_{2,1}$ in $\CC{3}$ where classified.     More recent work here includes the work of Isaev-Zaitsev \cite{iz} 
 where the equivalence problem for smooth $\mathcal C_{2,1}$ hypersurfaces was solved; an independent solution of the equivalence problem due to Medori-Spiro \cite{ms}; 
 explicit CR-curvature computation for $\mathcal C_{2,1}$ hypersurfaces due to Pocchiola \cite{pocchiola} (for the published version see \cite{pocchiola2}); the work of Beloshapka-Kossovskiy \cite{bk} 
 on the parameterization of the stability group of a $\mathcal C_{2,1}$ hypersurface by that of the sphere $S^3\subset\CC{2}$;  a development of the work \cite{iz} and \cite{ms} in complex dimension $4$ due to Porter-Zelenko \cite{zelenko}. Notably, in the cited work, the role of the tube over the light cone \eqref{tube} as the "model"\, hypersurface for the class $\mathcal C_{2,1}$ was clearly demonstrated. In particular, it follows that maps between (germs of) two hypersurfaces of class $\mathcal C_{2,1}$ are parameterized by the stability group of the model \eqref{tube}. 

We note that the tube over the light cone admits higher dimensional generalizations 
$$C_{k,l}=\left\{(z_1,...,z_{k+l}):\,\,y_1^2+\cdots y_k^2=y_{k+1}^2+\cdots+y_{k+l}^2,\,\,\, y_j=\im z_j, \,\, y_1^2+\cdots y_k^2 \neq 0\right\}\subset\CC{k+l}$$ 
for $k,l\geq 1,\,\,k+l\geq 3.$
All the tubes $C_{k,l}$ have a Levi kernel of rank one everywhere, are everywhere $2$-nondegenerate  and their full  automorphism  group is $SO(k+1,l+1)$ (see Fels and Kaup \cite{kaup}).  The tubes $C_{k,l}$ are important in that they are boundaries of {\em Hermitian Symmetric Domains}. There exist further classes of $2$-nondegenerate hypersurfaces related to  Hermitian Symmetric Domains, for example, {\em Lie spheres}. The latter fact further motivates the study of  everywhere $2$-nondegenerate hypersurfaces and their mappings. We shall mention here recent results of Mir \cite{mirdef} and Kossovskiy-Lamel-Xiao \cite{klx} on the regularity of mappings into $2$-nondegenerate hypersurfaces, and their applications in studying proper holomorphic maps of Hermitian Symmetric Domains due to Xiao-Yuan \cite{xiao}.   For more on the geometry of holomorphic maps between Hermitian Symmetric Domains, we refer to the Introduction in \cite{xiao} and references therein. 

\subsection{Overview of the results} The above mentioned substantial study of everywhere $2$-nondegenerate hypersurfaces still leaves completely open the following aspect of the problem: {\em extend Moser's normal form theory to the class of (real-analytic) everywhere $2$-nondegenerate hypersurfaces}.   
That is, we aim for {\em a complete convergent normal form} for  real-analytic everywhere $2$-nondegenerate hypersurfaces. We shall explain here breifly the main difficulty in extending Moser's homological approach and obtaining a complete normal form. Namely, 
as follows from the infinite Catlin multitype property, there is {\em no} choice of positive weights for the coordinates in $\CC{3}$ relevant to everywhere $2$-nondegenerate hypersurfaces such that each hypersurface becomes a perturbation of a holomorphically nondegenerate polynomial model.  In this way, $2$-nondegenerate hypersurfaces can {\em not} be treated in a  manner of Chern-Moser (and subsequent work) for obtaining their complete normal form. 

The main goal of this paper is to provide a new technique for overcoming difficulties as above and, as an outcome, obtain the desired complete normal form for everywhere $2$-nondegenerate hypersurfaces in $\CC{3}$. The normal form, accordingly, is defined uniquely up to the stability group of the model: the tube over the light cone \eqref{tube} (or, equivalently, its local rational realization \eqref{cone}). Notably, we are able to present any hypersurface of class     $\mathcal C_{2,1}$ as a perturbation of the rational model, and then set up, in the spirit of Moser, a {\em homological procedure} for investigating maps between two hypersurfaces. The latter is accomplished by a choice of weights for the coordinates in $\CC{3}$ where the coordinate corresponding to the Levi kernel gets weight $0$. Somewhat surprisingly, even though this approach leads to natural difficulties (for example, "weighted polynomials"\, are not polynomials anymore but power series), we are still able to reduce the space of mappings between hypersurfaces to studying the kernel of an appropriate {\em homological operator}. The   kernel of the latter operator is precisely the automorphism algebra of the model (given below). We shall note that such approach shares certain distinctive traits with that due to Huang-Yin in \cite{hy} (where a certain non-standard choice of weights served as a key to deal with the infinite-dimensionality of the automorphism group of the weighted homogeneous model over there).    

Similarly to the situation of Chern-Moser, the convergence is achieved by employing {\em chains}: distinguished curves in $M$, pointwise transverse to the complex tangent 
and being mapped by normalizing transformations into the standard "vertical"\, curve (see \eqref{Gamma} below). 
Since orbits of the linear part of the stabilizer do {\em not} act transitively on transverse directions anymore (unlike the Levi-nondegenerate situation), 
possible directions of chains form in the tangent space a certain {\em canonical cone} (given in appropriate coordinates by \eqref{cancone}). 
The construction of chains and their properties are discussed in detail in Section 3.2. We shall also mention that certain (degenerate) chains
were previously used by Kossovskiy-Zaitsev in \cite{generic},\cite{cmhyper} for proving convergence of normal forms of finite type hypersurfaces (obtained by Kol\'a\v r in \cite{kol05}). 
Chains for submanifolds of high codimension were also used in the work \cite{es} of Ezhov-Schmalz. As discussed below, for constructing the chains we use a quite recent original approach due to Zaitsev \cite{zaitsevnf}.

We shall emphasize that, to the best of our knowledge, the present paper gives the first  development of Moser's homological method to the class of infinite Catlin multitype hypersurfaces.  

As an application of  our theory, we obtain, similarly to the situation in Chern-Moser's theory, a characterization of {\em sphericity} for an everywhere $2$-nondegenerate hypersurface (i.e., the property of being equivalent to the model). The sphericity then amounts to the pointwise vanishing of two specific coefficients in a normal form of the hypersurface at a point (see \autoref{main2} below).  

Finally, we are able to apply the normal form for describing explicitly the {\em moduli space} of (real-analytic) everywhere $2$-nondegenerate hypersurafaces, 
considered up to a local biholomorphic equivalence (\autoref{main3} below). 
A certain difficulty in applying the normal form for describing the moduli space is that our normal form space describes in 
fact a larger class of hypersurfaces than that of {everywhere} $2$-nondegenerate ones. We overcome this difficulty 
by selecting a certain {\em distinguished part} of the normal form, which uniquely determines the rest of the normal form under the assumption of everywhere $2$-nondegeneracy.     In turn, the latter comes from the study of a certain ``mixed''\, Cauchy problem with the Cauchy data on a ``cross''.   Details are provided below in Section 3.4.

\subsection{The normal form}

Let $M\subset\CC{3}$ be an everywhere $2$-nondegenerate hypersurface. The tangent bundle $TM$ is endowed then with the canonical subbundles $T^{\CC{}}M$ and $K$, where $K_p$ is the Levi kernel at $p$. Accordingly, the quotient bundles  $TM/T^{\CC{}}M$ and $TM/K$ are well defined. 
%In view of that, for a CR-diffeomorphism $H:\,M\lr M^*$ between manifolds under consideration, the induced  maps $H':\,TM/T^{\CC{}}M\lr TM*/T^{\CC{}}M*$ and $H'':\,TM/K\lr TM*/K*$ are well defined. 

Further, let us introduce the space $\mathcal N$ of real-valued formal power series $$\Phi(z,\z,\bar z,\bar \z, u)=\sum_{k+l+\alpha+\beta\geq 5}\Phi_{kl\alpha\beta}(u)z^k\z^l\bar z^\alpha\bar\z^\beta$$
satisfying, in addition,
\begin{equation}\Label{nspace}
\begin{aligned}
&\Phi_{kl00}=\Phi_{kl10}=\Phi_{kl20}=0,\,\,k,l\geq 0;\\ 
&\Phi_{3001}=\Phi_{4001}=\Phi_{3011}=\Phi_{4011}=\Phi_{3030}=0.
\end{aligned}
\end{equation}
Here $(z,\zeta,w=u+iv)$ are the coordinates in $\CC{3}$.
\begin{definition}\Label{innf}
We say that a (formal or analytic) hypersurface $M\subset\CC{3}$ passing through $0$ {\em is in normal form}, if it is given by a defining equation 
\begin{equation}\Label{model+}
v=P(z,\z,\bar z,\bar\z)+\Phi(z,\z,\bar z,\bar \z, u),
\end{equation}
\end{definition}
\noindent where $P$ is as in \eqref{cone} and $\Phi\in\mathcal N$. 

\smallskip

We now formulate our main result. We make use of the local representation \eqref{germ} below for an everywhere $2$-nondegenerate hypersurface.

\begin{theorem}\label{main}
Let $M$ be a real-analytic everywhere $2$-nondegenerate hypersurface in $\CC{3}$, and $p\in M$. Then, there exists a biholomorphic transformation $H:\,(\CC{3},p)\lr(\CC{3},0)$ mapping $M$ into a hypersurface in normal form. A normalizing transformation $H$ is determined uniquely by the action of its differential $dH_p$ on the quotient space $T_pM/K_p$  and the transverse second order derivative at $p$ of the transverse component of $H$. 

In turn,  in any coordinates \eqref{germ}, a normalizing transformation is unique up to the right action of the $5$-dimensional  stability group of the model \eqref{cone}.
\end{theorem}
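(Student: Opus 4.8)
The plan is to set up a Chern--Moser style homological procedure adapted to the rational model $P$, using the weight assignment in which $z$ and $w$ carry positive weights and $\zeta$ (the Levi-kernel coordinate) carries weight $0$. First I would fix the local representation \eqref{germ} of $M$ and expand its defining equation as $v=P(z,\zeta,\bar z,\bar\zeta)+\sum_{\mu\geq\mu_0}\Phi^{(\mu)}$, where $\Phi^{(\mu)}$ collects the terms of weighted degree $\mu$ (these are power series in $\zeta,\bar\zeta$, not polynomials, which is the first technical novelty). Likewise I would expand a candidate normalizing map $H=\mathrm{id}+\sum_{\mu\geq 1}F^{(\mu)}$ into weighted-homogeneous components. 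Substituting into the equation and collecting terms of each weight $\mu$ yields, at the lowest nontrivial order, a \emph{linear homological equation} $L\bigl(F^{(\mu)}\bigr)=\text{(known data from lower orders)}$, where $L$ is the linearization, along the model, of the "map preserves the hypersurface" condition; the higher-weight corrections to $H$ then only affect strictly higher weights, so the normalization proceeds inductively weight by weight.

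The core of the argument is to analyze the homological operator $L$ and show that its cokernel is exactly complemented by the normal-form space $\mathcal N$ of \eqref{nspace}, while its kernel is precisely the infinitesimal automorphism algebra $\mathfrak{aut}$ of the model \eqref{cone}. For existence, at each weight one solves $L(F^{(\mu)})=\Psi^{(\mu)}$ modulo $\mathcal N$: the conditions $\Phi_{kl00}=\Phi_{kl10}=\Phi_{kl20}=0$ eliminate the terms that can be absorbed by the "elliptic/parabolic" directions of the linear stabilizer acting on $(z,w)$ (those reproducing the structure of $P$, whose numerator is $|z|^2+\Re(z^2\bar\zeta)$, hence weighted-harmonic in $z$ up to degree $2$), and the five extra normalizations $\Phi_{3001}=\Phi_{4001}=\Phi_{3011}=\Phi_{4011}=\Phi_{3030}=0$ kill the residual ambiguity coming from the part of $\mathfrak{aut}$ that is not seen by $dH_p$ on $T_pM/K_p$ together with the transverse second-order jet. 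I would verify by direct computation on the model that $\dim\mathfrak{aut}=5$ and that these five scalar conditions are transverse to the action of that kernel, so the solution of the homological equation becomes unique once $dH_p|_{T_pM/K_p}$ and the transverse second derivative of the transverse component of $H$ are prescribed. This identifies the fiber of normalizing maps over a fixed such jet with a single point, and the fiber over a fixed $M$ (allowing the jet to vary) with the orbit of the $5$-dimensional stability group $\mathrm{Aut}(C,0)$ acting on the right, proving the last sentence of the theorem.

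The convergence of the formal normalizing transformation is handled separately, via \emph{chains}: given the weighted homogeneity, the nonconvergent part of the formal solution would manifest as a formal-but-divergent transverse curve, and one shows instead that the normalizing conditions force $H$ to carry a genuine analytic chain of $M$ through $p$ to the standard vertical curve \eqref{Gamma}. Because the admissible chain directions fill the canonical cone \eqref{cancone}, fixing $dH_p|_{T_pM/K_p}$ selects an analytic chain, and re-deriving the normalization along a parametrized family of chains (as in \cite{generic,cmhyper}) upgrades the formal series to a convergent one; this combined with the uniqueness above yields the stated parametrization by $\mathrm{Aut}(C,0)\cong SO(3,2)_0$ (dimension $5$).

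The step I expect to be the main obstacle is the precise analysis of $L$ in the weight-$0$ variable $\zeta$: since weighted-homogeneous components are now infinite power series in $\zeta,\bar\zeta$, one must show that $L$ acts on each such component with a cokernel that is still finite in the relevant directions and, crucially, that the induction over weights closes despite the appearance of these infinite series at every step --- i.e. that the normal-form conditions \eqref{nspace}, which only constrain finitely many coefficients $\Phi_{kl\alpha\beta}$ per weight, genuinely suffice to pin down the whole $\zeta$-series of $F^{(\mu)}$. Establishing that $\mathcal N$ is a true complement to $\mathrm{Im}\,L$ (not merely a partial one) is where the rationality of the model and the explicit form of $\mathfrak{aut}(C)$ must be used most carefully.
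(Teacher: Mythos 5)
Your outline follows the paper's strategy for the formal part (weights $[z]=1$, $[\zeta]=0$, $[w]=2$, a weighted homological operator whose image is complemented by $\mathcal N$, chains for convergence), but as a proof it has three genuine gaps. First, you start from the expansion $v=P+\sum_{\mu}\Phi^{(\mu)}$ with the rational model $P$ as the leading part, but this is not automatic: a hypersurface in the form \eqref{germ} with $Q^3=\frac12(z^2\bar\zeta+\bar z^2\zeta)$ need not have all the higher $|\zeta|^{2j}$ corrections of $P$ built in. The paper must first prove (Proposition \ref{goodperturb}) that the identical vanishing of the Levi determinant forces every weighted piece of weight $\le 2$ to agree with that of $P$, by an induction on the determinant identity \eqref{det} that integrates $Q^{k+1}_{\zeta\bar\zeta}$ and uses the prenormalization to kill the harmonic ambiguity. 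Without this step the induction over weights has no base and the whole setup collapses. Second, your convergence argument is not an argument: asserting that divergence ``would manifest as a formal-but-divergent transverse curve'' and that re-deriving the normalization along chains ``upgrades'' the series is not how the paper proceeds, and I do not see how it could be made to work as stated. The paper instead \emph{constructs} a convergent normalizing map in finitely many explicitly holomorphic steps (gauge transformations and Chern--Moser-type substitutions solving for entire functions of $(z,\zeta,w)$ at once via the implicit function theorem, plus an ODE system along the chain for the reparametrization), proves that the chain direction field is real-analytic by showing the relevant jet of the formal normalizing map depends polynomially on the transverse direction parameter, and only then invokes formal uniqueness to conclude that \emph{every} formal normalizing map is convergent. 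None of this machinery is present in your proposal, and it is the heart of the theorem.

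Third, there are inaccuracies in the group-theoretic bookkeeping that affect the uniqueness statement. The full automorphism algebra of the model is $\mathfrak{so}(3,2)$, of dimension $10$, not $5$; the $5$-dimensional object is the stability subalgebra $\g_0^c\oplus\g_1\oplus\g_2$, and the stability group is not $SO(3,2)_0$. Relatedly, the paper does not argue that ``$\ker L=\mathfrak{aut}$ and the five extra conditions are transverse to it''; it first normalizes the low-order jet of the map (Proposition \ref{initterms}) and factors out the subgroup $G_+$ so as to impose $f_{zz}=0$, $\Re h_{ww}=0$, and then proves that the homological operator is \emph{injective} on the resulting space $\mathcal V$ while $\mathcal W=\mathcal R\oplus\mathcal N$. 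That injectivity is established by an explicit, rather delicate, resolution of an infinite triangular system of linear ODEs for the coefficient functions $f_{kl}(u),g_{kl}(u),h_{kl}(u)$ (equations \eqref{e1}--\eqref{e26}); you correctly flag this as the main obstacle but do not carry it out, and your heuristic for why the conditions $\Phi_{kl00}=\Phi_{kl10}=\Phi_{kl20}=0$ suffice (absorption by $h$, $f$, $g$ respectively through $P_z=\bar z+\cdots$ and $P_\zeta\ni\frac12\bar z^2$) is the right idea but is not a substitute for the computation.
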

The stability group of the model \eqref{cone} is described in detail in Section 2.1 below.
\begin{remark}\label{2nondegnf}
We shall emphasize once more that the class of  hypersufaces in normal form \eqref{nspace} is larger than that of {everywhere} $2$-nondegenerate hypersurfaces in normal form \eqref{nspace}. However, as shown in \autoref{main3} below, it is possible to select a distinguished part of the normal form which, first of all, can be chosen arbitrary, and second, determines one and only one everywhere $2$-nondegenerate hypersurface with the given distinguished part of the normal form. 
\end{remark}

\subsection{Applications of the normal form} In this section, we provide two applications of the complete convergent normal form provided in \autoref{main}.

First of all, by combining \autoref{main} with the result of Pocchiola \cite{pocchiola}, we obtain a criterion for the {\em sphericity} of a general hypersurface, i.e. its local equivalence  to the model: the tube over the light cone.

\begin{theorem}\Label{main2}
Let $M\subset\CC{3}$ be a real-analytic everywhere $2$-nondegenerate hypersurface, and $p\in M$. Then $M$ is locally biholomorphic near $p$ to the tube over the light cone \eqref{tube} if and only if for every point $q\in M$ nearby $p$ we have: 
\begin{equation}\Label{sphericity}
\Phi_{3002}(0)=\Phi_{5001}(0)=0
\end{equation}
for the coefficients of some (and hence any) normal form at $q$.
\end{theorem}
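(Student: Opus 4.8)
The plan is to combine the completeness of the normal form from \autoref{main} with the invariant description of the CR-curvature of $\mathcal C_{2,1}$ hypersurfaces obtained by Pocchiola. The key point is that the model \eqref{tube} is, up to local biholomorphism, the unique flat hypersurface in the class, so that sphericity near $p$ is equivalent to the vanishing of Pocchiola's primary curvature invariant on a neighborhood of $p$. Hence the theorem reduces to matching the pointwise vanishing of that curvature invariant at a point $q$ with the vanishing of the two specific normal form coefficients $\Phi_{3002}(0)$ and $\Phi_{5001}(0)$ computed in a normal form centered at $q$.

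First I would recall that by \autoref{main} any two normal forms of $M$ at a fixed point $q$ differ by the right action of the (5-dimensional) stability group of \eqref{cone}; therefore the vanishing or non-vanishing of a coordinate-independent object, such as Pocchiola's curvature at $q$, can in principle be read off from \emph{any} normal form at $q$, which justifies the ``some (and hence any)'' phrasing. The substantive step is then a direct computation: I would take a hypersurface in normal form \eqref{model+}, insert it into Pocchiola's explicit formula for the (lowest order, essential) CR-curvature coefficient, and expand to the order at which the coefficients $\Phi_{3002}$ and $\Phi_{5001}$ first enter. Because the normalization conditions \eqref{nspace} have already eliminated all lower-order coefficients that could contribute, one expects Pocchiola's invariant evaluated at the origin to be, up to a nonzero universal constant, a nontrivial linear combination of $\Phi_{3002}(0)$ and $\Phi_{5001}(0)$ — more precisely, I anticipate that there are two independent scalar components of the curvature at a point and they correspond exactly to these two coefficients. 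This gives: curvature vanishes at $q$ $\iff$ $\Phi_{3002}(0)=\Phi_{5001}(0)=0$ in a normal form at $q$.

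To finish, I would argue the equivalence with sphericity. If $M$ is locally biholomorphic near $p$ to the tube, then the CR-curvature of $M$ vanishes identically near $p$, hence at every $q$ near $p$, hence \eqref{sphericity} holds at each such $q$ by the computation above. Conversely, if \eqref{sphericity} holds at every $q$ in a neighborhood of $p$, then Pocchiola's curvature vanishes identically there; by the flatness theorem for $\mathcal C_{2,1}$ (the Cartan-type result that vanishing curvature forces local equivalence to the homogeneous model), $M$ is locally equivalent near $p$ to the tube over the light cone.

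The main obstacle I expect is the bookkeeping in the curvature computation: Pocchiola's formula is written in a moving-frame/Cartan language adapted to the tube, not in the coordinates of \eqref{cone}, so one must first express his invariant in terms of a defining function $v=P+\Phi$ and then track which $\Phi_{kl\alpha\beta}$ survive after using \eqref{nspace}. A secondary subtlety is confirming that the two surviving coefficients are genuinely the full obstruction — i.e. that no higher-order coefficient can compensate for a nonzero $\Phi_{3002}(0)$ or $\Phi_{5001}(0)$ — which follows once one knows the curvature at a point is determined by the $2$-jet of the normal form data in the relevant weighted sense, but this needs to be stated carefully using the weighted structure underlying \autoref{main}.
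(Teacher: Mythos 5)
Your proposal follows essentially the same route as the paper: the paper invokes Pocchiola's two basic invariants $W$ and $J$ (whose simultaneous vanishing on a neighborhood characterizes local equivalence to the model), evaluates them on a hypersurface in normal form, and finds that $W|_0$ is proportional to $\Phi_{3002}(0)$ while, once that coefficient vanishes, $J|_0$ is proportional to $\Phi_{5001}(0)$. The only cosmetic difference is that the paper records this as a triangular (conditional) correspondence rather than your anticipated linear combination, which does not affect the argument.
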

\autoref{main2} is a direct analogue in the $2$-nondegenerate setting of Chern-Moser's theorem in \cite{chern} on the characterization of sphericity of a Levi-nondegenerate hypersurface via coefficients of its normal form.
\begin{remark}\Label{smooth}
The assertion of \autoref{main2} can be extended to the class of merely {\em smooth} everywhere $2$-nondegenerate submanifolds $M$. In this case, one has to employ the {\em formal} normal form for $M$, which is still available in the smooth case. 
\end{remark}

We now describe our second application of the normal form. Since the inception of the notion of finite nondegeneracy (originally introduced, as mentioned above, by Baouendi-Huang-Rothschild in \cite{bhr}), it has been an open problem to describe the moduli space of (real-analytic) everywhere $k$-nondegenerate hypersurfaces in $\CC{N}$ for various $k,N$. To the best of our knowledge, no such description has been available till present in any dimension. Our normal form allows for solving the latter problem for $2$-nondegenerate hypersurfaces in $\CC{3}$. Namely, fix   a real-analytic everywhere $2$-nondegenerate hypersurface $M\subset\CC{3}$ in normal form \eqref{model+} at the origin. Consider then, for the normal form, the  sum of all terms in $\Phi$ which are not divisible by $\z\bar \z$. The latter has the form 
\begin{equation}\label{disting}
\Phi(z,\z,\bar z,0,u)+\Phi(z,0,\bar z,\bar\z,u)-\Phi(z,0,\bar z,0,u).%=:2\re\chi(z,\z,\bar z,u)
\end{equation}
Analytic at the origin expressions \eqref{disting} are in one-to-one correspondence with analytic the  near the origin functions
$$\chi(z,\z,\bar z,u)=\sum_{k,l,\alpha\geq 0}\chi_{kl\alpha }(u)z^k\z^l\bar z^\alpha$$
appearing as
$$\chi(z,\z,\bar z,u):=\Phi(z,\z,\bar z,0,u)=\sum_{k,l,\alpha\geq 0}\Phi_{kl\alpha 0}(u)z^k\z^l\bar z^\alpha.$$ 

\smallskip

\noindent The latter functions $\chi$ satisfy, in case $\Phi$ is in normal form, the conditions:
\begin{equation}\label{Dspace}
\begin{aligned}
&\chi(z,0,\bar z,u)\in\RR{},\\ 
&\chi_{kl0}=\chi_{kl1}=\chi_{kl2}=0,\,\,k,l\geq 0,\\ 
&\chi_{013}=\chi_{014}=\chi_{113}=\chi_{114}=\chi_{303}=0
\end{aligned}
\end{equation}
 (coming respectively from the reality requirement $\Phi(z,\z,\bar z,\bar\z,u)\in\RR{}$ and the normal form conditions \eqref{nspace}).  

%\begin{equation}\label{series}
%\begin{aligned}
%\chi(z,\z,\bar z,u):=\Phi(z,\z,\bar z,0,u)=\sum_{k+l+\alpha+\geq 5}\Phi_{kl\alpha 0}(u)z^k\z^l\bar z^\alpha, \\ 
%\tau(z,\z,\bar z,u):=\Phi_{\bar\z}(z,\z,\bar z,0,u)=\sum_{k+l+\alpha+\geq 4}\Phi_{kl\alpha 0}(u)z^k\z^l\bar z^\alpha.
%\end{aligned}
%\end{equation}
%If expanding $$\chi(z,\z,\bar z,u)=\sum_{k+l+\alpha+\geq 5}\chi_{kl\alpha}(u)z^k\z^l\bar z^\alpha$$ and similarly for $\tau$, we have:
%\begin{equation}\label{realty}
%\chi_{\alpha lk}(u)=\overline{\chi_{kl\alpha}(u)}, \quad \chi_{kl0}(u)=\chi_{kl1}(u)=\chi_{kl2}(u)=0,\,\,k,l\geq 0,\,\,
%\end{equation}
%We call the linear in $\bar\z$ function
%\begin{equation}\label{principal}
%\chi(z,\z,\bar z,u)+\tau(z,\z,\bar z,w)\bar\z
%\end{equation}
\begin{definition}
We call the function \eqref{disting}  {\em the distinguished part of the normal form of $M$ at $p$}. 
 We also denote by $\mathcal D$ the respective linear functional space \eqref{Dspace}.
 \end{definition}
 
 We are now in the position to describe the moduli space under discussion.
 
\begin{theorem}\label{main3}
The natural map $\pi$ assigning to a real-analytic everywhere $2$-nondegenerate hypersurface $M\subset\CC{3}$ in normal form \eqref{nspace} the distinguished part of the normal form is a bijection onto  the space $\mathcal D$.

In this way, the moduli space of real-analytic everywhere $2$-nondegenerate hypersurfaces considered up to a local biholomorphic equivalence carries the structure $$\mathcal D/G,$$ where $G$ is the ($5$-dimensional) stability subgroup in the automorphism group of the rational model \eqref{cone}.  
\end{theorem}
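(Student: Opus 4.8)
The plan is to prove \autoref{main3} by leveraging \autoref{main} together with a careful analysis of how the everywhere $2$-nondegeneracy condition propagates through the normal form. First I would establish the \emph{surjectivity} of $\pi$ onto $\mathcal D$. Given an arbitrary $\chi\in\mathcal D$, i.e.\ a function satisfying \eqref{Dspace}, one must produce an everywhere $2$-nondegenerate hypersurface $M$ in normal form whose distinguished part is exactly $\chi$. The idea here is that the defining equation \eqref{model+} is determined by $\Phi$, and $\Phi$ decomposes into its part not divisible by $\z\bar\z$ (which is $\chi$ plus its conjugate contribution, i.e.\ \eqref{disting}) plus a remainder divisible by $\z\bar\z$. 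The everywhere $2$-nondegeneracy should force the $\z\bar\z$-divisible part to be \emph{uniquely reconstructible} from $\chi$: the $2$-nondegeneracy condition \eqref{2nondeg}, written out along $M$ at \emph{every} point (not just the origin), becomes a system of PDEs/functional relations relating the higher $\z,\bar\z$-Taylor coefficients $\Phi_{kl\alpha\beta}$ with $\beta\geq 1$ (and $\alpha$-indices shifted) to the "$\beta=0$" data $\Phi_{kl\alpha0}=\chi_{kl\alpha}$. One solves this recursively, degree by degree in the weighted grading with the Levi-kernel variable $\z$ having weight $0$, and the solvability/uniqueness of each step is what needs to be checked. This is essentially the statement already flagged in \autoref{2nondegnf} that the distinguished part "determines one and only one everywhere $2$-nondegenerate hypersurface."

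Next I would prove \emph{injectivity} of $\pi$ on the set of everywhere $2$-nondegenerate hypersurfaces in normal form. Suppose $M_1,M_2$ are both in normal form \eqref{nspace}, both everywhere $2$-nondegenerate, with the same distinguished part. By the reconstruction in the previous paragraph, the full $\Phi$'s coincide, hence $M_1=M_2$ as formal (and analytic) hypersurfaces. The only subtlety is to make sure that the reconstruction map is well-defined on the nose — i.e.\ that two everywhere $2$-nondegenerate hypersurfaces in normal form which agree in the $\z\bar\z$-free part genuinely have identical defining equations; this again reduces to uniqueness in the recursive PDE system. Combining the two directions gives that $\pi$ is a bijection onto $\mathcal D$.

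For the final assertion — that the moduli space of everywhere $2$-nondegenerate hypersurfaces up to local biholomorphism is $\mathcal D/G$ — I would argue as follows. By \autoref{main}, every such hypersurface germ $(M,p)$ is biholomorphic to one in normal form, and the normalizing transformation is unique up to the right action of the $5$-dimensional stability group $G$ of the model \eqref{cone}. Hence the set of normal forms of a fixed germ is a $G$-orbit in the normal-form space; passing to the distinguished part via $\pi$, and using that $\pi$ is a bijection onto $\mathcal D$, the biholomorphism classes correspond exactly to $G$-orbits in $\mathcal D$ (where $G$ acts on $\mathcal D$ via its action on normal forms transported through $\pi^{-1}$). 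One should record that this induced $G$-action on $\mathcal D$ is well-defined, i.e.\ $G$ preserves the everywhere $2$-nondegenerate locus and hence descends to $\mathcal D$ through $\pi$; this follows since $G$ consists of automorphisms of the model, which preserve $2$-nondegeneracy, combined with the compatibility of $\pi$ with the $G$-action.

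The main obstacle I expect is the \emph{reconstruction step}: proving that the everywhere $2$-nondegeneracy condition \eqref{2nondeg}, when expanded along $M$ at all points, yields a triangular (solvable with unique solution) system expressing the $\z\bar\z$-divisible part of $\Phi$ in terms of $\chi$. This requires writing the Levi kernel section $K$ and the bracket $[K,T^{\CC{}}M]$ explicitly in terms of the defining function $v=P+\Phi$, identifying the precise coefficient of the "$2$-nondegeneracy determinant" as a function of the Taylor data, and checking that the normal-form conditions \eqref{nspace} plus reality \eqref{Dspace} exactly pin down the free parameters so that each recursion step has a unique solution. The weight-$0$ feature of the $\z$-variable means this is not a finite but a genuinely infinite recursion (within each finite weighted-degree layer one still solves infinitely many coefficient equations indexed by powers of $\z,\bar\z$), so some care with convergence — inherited from the analyticity of $\chi$ and the structure of the recursion — will be needed to conclude that an analytic $\chi$ produces an analytic $M$.
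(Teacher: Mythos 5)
Your outline matches the paper's strategy at the level of architecture: reduce everything to showing that the $\z\bar\z$-divisible part of $\Phi$ is uniquely reconstructible (analytically) from the distinguished part, and then quotient by $G$ using \autoref{main}. The moduli-space paragraph is fine and is essentially what the paper does. But the core of the theorem is precisely the reconstruction step, and there you have two problems. First, a conceptual one: the condition that produces \emph{equations} for the missing Taylor coefficients is not the $2$-nondegeneracy condition \eqref{2nondeg} --- that is an open (non-vanishing) condition and yields no identities --- but the \emph{everywhere Levi-degeneracy}, i.e.\ the identical vanishing of the Levi determinant \eqref{deteqn} for the complex defining function $\theta$. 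The paper computes the $1$- and $2$-jets of $\theta$ in terms of $\varphi=P+\Phi$ and shows that \eqref{deteqn} becomes a single second-order PDE of the form $\Phi_{\z\bar\z}=T(z,\bar z,\z,\bar\z,\Phi_{\bar z},\dots,\Phi_{\z u})$ with $T$ universal, rational, and satisfying $T(0)=0$, $dT(0)=0$.

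Second, and more seriously, the step you yourself flag as ``the main obstacle'' --- unique analytic solvability of the infinite recursion in powers of $\z,\bar\z$ --- is exactly where the proof lives, and you do not supply a mechanism for it. The paper's resolution is to recognize the equation $\Phi_{\z\bar\z}=T(\cdots)$ with data prescribed on the cross $\z\bar\z=0$ (namely the distinguished part $\chi$) as a classical nonlinear \emph{Goursat problem}, and to invoke Lednev's theorem (see also Wagschal) for existence and uniqueness of an analytic solution; the hypotheses $T(0)=0$, $dT(0)=0$ are arranged by shifting $\Phi$ by the expression \eqref{disting} and using that elements of $\mathcal D$ have vanishing $4$-jet. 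A naive degree-by-degree recursion does give formal uniqueness, but convergence of the resulting series does not follow from ``analyticity of $\chi$ and the structure of the recursion'' without an argument of majorant or Goursat type; absent that, both surjectivity onto $\mathcal D$ (producing an \emph{analytic} $M$ from analytic $\chi$) and the claim that the reconstructed $\Phi$ lies in the normal form space remain unproved. So the proposal is a correct plan with the decisive analytic ingredient missing.
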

The action of the group $G$ on the space $\mathcal D$ can be  viewed  from the normalization  procedure in Section 2.3 (while the group $G$ is described, once again, in Section 2.1). 

\bigskip

{\it Acknowledgements:}{
We would like to thank Jan Gregorovic for his careful reading and useful remarks on an early version of this paper.} 

\section{A formal normal form}

\subsection{The light cone and its automorphisms} 

We shall now describe in detail the automorphisms of the local rational model \eqref{cone} for the tube over the light cone \eqref{tube}
(obtained by Fels-Kaup in \cite{kaup2}).

According to \cite{kaup2}, the cone \eqref{tube} is locally bi-rationally equivalent to the real-algebraic (rational) hypersurface 
\eqref{cone}.
In \cite{kaup2}, the infinitesimal automorphism algebra of \eqref{cone} is described using a grading by complex integers. Our approach is different though. We introduce the following choice of weights playing a crucial role in our construction: 
\begin{equation}\Label{weights}
[z]=[\bar z]=1,\,\,[\zeta]=[\bar \zeta]=0,\,\,[w]=[\bar w]=2
\end{equation}
The rational model \eqref{cone} is then a {\em homogeneous hypersurface} with respect to the choice of weights \eqref{weights}.

We shall recall that the {\em infinitesimal automorphism algebra } of a CR-submanifold $M\subset\CC{N}$ at a point $p\in M$ is the Lie algebra $\mathfrak{hol}(M,p)$  of holomorphic vector fields $$f_1(z)\frac{\partial}{\partial z_1}+\cdots+f_N(z)\frac{\partial}{\partial z_N}$$ with coefficients $f_j$ holomorphic in an open neighborhood of $p$ in $\CC{N}$ such that their real part is  tangent to $M$ at every point $q\in M$. The vector fields in $\mathfrak{hol}(M,p)$ are precisely the ones generating flows of local biholomorphic automorphisms of the germ $(M,p)$. 

Let us now employ the weight system \eqref{weights} to present the infinitesimal automorphism algebra $\g$ of  \eqref{cone} as a graded Lie algebra. Let us set:
\begin{equation}\Label{grading}
[z]=1,\,\,[\zeta]=0,\,\,[w]=2,\,\,\left[\frac{\partial}{\partial z}\right]=-1,\,\,\left[\frac{\partial}{\partial \zz}\right]=0,\,\,\left[\frac{\partial}{\partial w}\right]=-2.
\end{equation}
Now the results in \cite{kaup2} can be interpreted as presenting $\g$ in the form:
$$\g=\g_{-2}\oplus\g_{-1}\oplus\g_{0}\oplus\g_{1}\oplus\g_{2},$$
where the negatively graded components are
\begin{equation}\Label{g-}
\begin{aligned}
\g_{-2}=&\mbox{span}\,\left\{\frac{\partial}{\partial w}\right\},\\
\g_{-1}=&\mbox{span}\,\left\{(1-\zz)\frac{\partial}{\partial z}+2iz\frac{\partial}{\partial w},\,\,i(1+\zz)\frac{\partial}{\partial z}+2z\frac{\partial}{\partial w}\right\},\qquad\qquad\qquad\qquad\qquad
\end{aligned}
\end{equation}
the zero component is split as
$\g_{0}=\g_0^c\oplus\g_0^s$
with
\begin{equation}\Label{g0}
\begin{aligned}
\g_0^c=&\mbox{span}\,\left\{z\frac{\partial}{\partial z}+2w\frac{\partial}{\partial w},\,\,iz\frac{\partial}{\partial z}+2i\zz\frac{\partial}{\partial \zz}\right\},\\
\g_0^s=&\mbox{span}\,\left\{  -z\zz\frac{\partial}{\partial z}+ (1-\zz^2)\frac{\partial}{\partial \zz}+iz^2\frac{\partial}{\partial w},\,\,iz\zz\frac{\partial}{\partial z}+i(1+\zz^2)\frac{\partial}{\partial \zz}+z^2\frac{\partial}{\partial w}\right\},
\end{aligned}
\end{equation}
and the positively graded components are
\begin{equation}\Label{g+}
\begin{aligned}
\quad\g_{1}=&\mbox{span}\,\left\{(z^2+iw+i\zz w)\frac{\partial}{\partial z}+2(z+z\zz)\frac{\partial}{\partial \zz}+2zw\frac{\partial}{\partial w},\right.\\
\qquad\mbox{}&\left.\qquad\qquad\qquad \qquad\qquad\quad(iz^2+w+\zz w)\frac{\partial}{\partial z}+2i(z\zz-z)\frac{\partial}{\partial \zz}+2izw\frac{\partial}{\partial w}\right\},\\
\g_{2}=&\mbox{span}\,\left\{zw\frac{\partial}{\partial z}-iz^2\frac{\partial}{\partial \zz}+w^2\frac{\partial}{\partial w}\right\}.
\end{aligned}
\end{equation}
We conclude that the stability subalgebra $\mathfrak h=\{X\in\g:\,\,X(0)=0\}\subset\g$ is decomposed as
\begin{equation}\Label{isotropy}
\mathfrak h=\g_0^c\oplus\g_1\oplus\g_2.
\end{equation}
The stability group $G$ of the rational model \eqref{cone} is generated by the algebra \eqref{isotropy} (i.e. it is the exponential image of \eqref{isotropy}). Its subgroup $G_0^c$ generated by the component $\g_0^c$ consists of the scalings
\begin{equation}\Label{scalings}
z\mapsto re^{i\varphi}z,\quad \zz\mapsto e^{2i\varphi}\zz, \quad w\mapsto r^2w, \quad r>0,\,\,\varphi\in\RR{}.
\end{equation}
The subgroup $G_+$ generated by the subalgebra $\g_+:=\g_1\oplus\g_2$ consists of certain rational transformations. As the respective rational expressions are cumbersome and are not used in the paper, we do not provide them here.

\subsection{The light cone as a model} Recall that, according to  Ebenfelt \cite{ebenfeltC3}, in appropriate local holomorphic coordinates a germ of an everywhere Levi degenerate but holomorphically nondegenerate hypersurface $M\subset\CC{3}$ can be represented as:
\begin{equation}\Label{germ}
v=|z|^2+\sum_{k\geq 3} Q^k(z,\zeta,\bar z,\bar \zeta,u), 
\end{equation}
where all $Q^k$ are homogeneous polynomials of weight $k$ with respect to the grading system
\begin{equation}\Label{initweights}
[z]=[\zeta]=[\bar z]=[\bar \zeta]=1,\,\,[w]=[\bar w]=2,
\end{equation}
and, in addition,
\begin{equation}\Label{Q3}
Q^3=\frac{1}{2}(z^2\bar\zeta+\bar z^2\zeta).
\end{equation}
Furthermore, as shown in \cite{bk}, we may assume (after a formal coordinate change) that all terms of the kind 
\begin{equation}\Label{killed}
z^k\zz^l\bar z^0\bar\zz^0u^m, \quad z^k\zz^l\bar z^1\bar\zz^0u^m,\,(k,l)\neq (1,0), \quad z^k\zz^l\bar z^2\bar\zz^0u^m,\,(k,l)\neq (0,1)
\end{equation}
and their conjugated are not present in \eqref{germ}. We will furtheremore show in Section 3 that coordinates \eqref{killed} can be in turn chosen holomorphic. 
\begin{definition}
In what follows, a (formal) hypersurface \eqref{germ}, satisfying \eqref{Q3} and the condition of absence of terms \eqref{killed} is called {\em prenormalized}.
\end{definition}

As discussed in the Introduction, the key point of this paper is using, in contrast to \eqref{initweights}, the weight system \eqref{weights}, which reflects the (infinite)
Catlin multitype of the manifold. We make at this point the following
\begin{convention}
We say that a formal series in $z,\zz,\bar z,\bar\zz,u$ {\em has weight $k\geq 0$} in the grading \eqref{weights}, if each Taylor polynomial of it has the weight $k$ in the grading \eqref{weights}.
\end{convention}

Our immediate goal is to show that, with respect to the weights \eqref{weights}, any everywhere Levi degenerate hypersurface $M\subset\CC{3}$ is a perturbation of
(the rigid model) the light cone \eqref{cone}. 

Recall that $P(z,\zz,\bar z,\bar\zz)$ denotes the right hand side of \eqref{cone}.

\begin{proposition}\Label{goodperturb}
Let $M$ be a (formal or analytic) prenormalized hypersurface \eqref{germ}. Then the defining equation of $M$ can be written as
\begin{equation}\Label{perturb}
v=P(z,\zz,\bar z,\bar\zz)+\sum_{k\geq 3}\Phi_k(z,\zeta,\bar z,\bar \zeta,u),
\end{equation}
where each $\Phi_k$ has weight $k$ with respect to the grading \eqref{weights}.
\end{proposition}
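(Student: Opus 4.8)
The plan is to start from a prenormalized equation \eqref{germ} and re-expand its right-hand side according to the weight system \eqref{weights} rather than \eqref{initweights}, then identify the lowest-weight terms with $P(z,\zz,\bar z,\bar\zz)$. First I would observe that under \eqref{weights} the Hermitian term $|z|^2$ has weight $2$, the cubic term $Q^3=\tfrac12(z^2\bar\zeta+\bar z^2\zeta)$ from \eqref{Q3} also has weight $2$ (since $[\zeta]=0$), and $u$ has weight $2$ as well. So every monomial $z^k\zz^l\bar z^\alpha\bar\zz^\beta u^m$ occurring in \eqref{germ} has $(z,\bar z,u)$-weight $k+\alpha+2m$, which is bounded below by its $\zeta$-free weight; crucially, since $\zeta$ carries weight $0$, monomials of a given $(k,\alpha,m)$ can have arbitrarily many factors of $\zeta,\bar\zeta$ without changing their weight. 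Thus the naive re-grading produces, at each fixed weight $k$, a \emph{power series} in $\zeta,\bar\zeta$ rather than a polynomial — exactly the phenomenon flagged in the Introduction.

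The key step is to show that the sum of all weight-$2$ terms of \eqref{germ}, as a formal power series in $\zeta,\bar\zeta$ with coefficients that are weight-$2$ polynomials in $(z,\bar z,u)$, equals precisely $P(z,\zz,\bar z,\bar\zz)=\dfrac{|z|^2+\re(z^2\bar\zz)}{1-|\zz|^2}$. Expanding $P$ as a geometric series gives
\begin{equation}\Label{Pexp}
P=\bigl(|z|^2+\tfrac12 z^2\bar\zz+\tfrac12\bar z^2\zz\bigr)\sum_{j\ge0}|\zz|^{2j},
\end{equation}
and every term here indeed has weight $2$ in \eqref{weights}. To see that \eqref{germ} must actually produce this series, I would use the hypothesis that $M$ is everywhere Levi degenerate together with the prenormalization conditions \eqref{killed}: the requirement that the Levi form be degenerate at every point of $M$ (not just at $0$) forces a differential relation on the weight-$2$ part of the defining function, and one checks that $P$ is the unique solution of that relation compatible with the initial data $|z|^2+\re(z^2\bar\zz)+\cdots$ coming from \eqref{Q3} and with the absence of the pure and near-pure terms \eqref{killed}. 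Equivalently, one can invoke that \eqref{germ} with the stated normalizations is, up to weight $2$, a formal deformation of the light cone, whose weight-$2$ jet is rigidly pinned down by \eqref{kaup2}'s computation. Having matched the weight-$2$ part with $P$, everything of weight $\ge 3$ is collected into $\sum_{k\ge3}\Phi_k$, with $\Phi_k$ the (possibly infinite-in-$\zeta$) weight-$k$ component; convergence in the analytic case follows because re-grouping a convergent power series by a coarser weighting does not affect convergence on a (possibly smaller) polydisc.

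The main obstacle I anticipate is precisely the bookkeeping that the weight-$2$ part of a general prenormalized $M$ is \emph{forced} to be $P$ and nothing more: a priori the weight-$2$ component is some power series $\sum_{j} a_j(z,\bar z,u)|\zz|^{2j}+(\text{mixed }\zz,\bar\zz\text{ terms})$, and one must rule out any weight-$2$ contribution beyond $P$. This is where everywhere (as opposed to pointwise) Levi degeneracy is essential — it should propagate the constraint through all powers of $\zeta$ — and where the prenormalization \eqref{killed} is used to kill the remaining ambiguity (terms with $\bar\zz$-degree $0,1$ or the specific cubic slots). I would carry this out by writing the Levi-degeneracy condition as the vanishing of the relevant $2\times2$ determinant of the complex Hessian of the defining function restricted to $M$, expanding to weight $2$, and solving the resulting recursion in the $\zeta$-degree; the solution is the geometric series \eqref{Pexp}. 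Everything else — splitting off $\sum_{k\ge3}\Phi_k$ and verifying each $\Phi_k$ has weight $k$ — is then immediate from the definitions.
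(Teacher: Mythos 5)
Your strategy is essentially the one the paper uses: write the everywhere-Levi-degeneracy condition as the vanishing of the $3\times 3$ Levi determinant \eqref{det}, observe that its only linear-in-$Q$ contribution is $-4\sum Q^k_{\zz\bar\zz}$, so that the identity becomes a recursion determining $\partial_\zz\partial_{\bar\zz}$ of the defining function, integrate in $\zz,\bar\zz$, and use the prenormalization \eqref{killed} to dispose of the resulting ambiguity (the kernel of $\partial_\zz\partial_{\bar\zz}$) up to remainders of weight $\ge 3$. The paper carries this out for the complex defining equation and organizes the induction by the standard degree $k$ of \eqref{initweights}, proving at each step the refined statement \eqref{Qk}, i.e.\ $Q^k=Q^k_0+O(|z|^3)+O(|z||\bar w|)+O(|\bar w|^2)$. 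Your ``recursion in the $\zeta$-degree at fixed weight $2$'' is the same computation restricted to the model stratum; note, however, that the weight system \eqref{weights} alone does not filter the nonlinear determinant identity (entries such as $\theta_{z\bar w}$ lower the new weight of a monomial of $Q$ by $3$, so the weight-$2$ part of a product of entries can a priori involve weight-$3$ parts of $Q$), so you still need the standard-degree filtration to make the recursion well-founded --- which is exactly why the paper inducts on $k$.

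The one genuine gap in the written plan: you only argue that the weight-$2$ component equals $P$ and that everything of weight $\ge 3$ is absorbed into $\sum_{k\ge3}\Phi_k$, but the proposition also requires that there be \emph{no} components of weight $0$ or $1$. These are not excluded a priori: \eqref{germ} together with \eqref{Q3} and the prenormalization \eqref{killed} still permits monomials such as $\zz^2\bar\zz^2$ (weight $0$, sitting in $Q^4$) or $z\zz\bar\zz^2$ (weight $1$), since \eqref{killed} only forbids terms with $\bar\zz$-degree zero and $\bar z$-degree at most $2$ (and conjugates). The same determinant recursion does rule these out --- e.g.\ the coefficient of $\zz\bar\zz$ in $Q^4_{\zz\bar\zz}$ is forced to agree with that of $(|z|^2|\zz|^2)_{\zz\bar\zz}$ modulo admissible remainders, so the $\zz^2\bar\zz^2$ slot must vanish --- but this has to be stated and checked; it is precisely what the paper's inductive statement \eqref{Qk} packages together with the identification of the weight-$2$ part. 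With that addition your argument closes.
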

\begin{proof}
It is convenient to switch to the complex defining equation 
$$w=\theta(z,\bar z,\bar w)$$
in \eqref{germ} (see \cite{ber}), and write the result as
\begin{equation}\Label{germc}
w=\bar w+2i\Bigl(|z|^2+\sum_{k\geq 3} Q^k(z,\zeta,\bar z,\bar \zeta,\bar w)\Bigr)
\end{equation}
(the polynomials $Q^k$ in \eqref{germ} and \eqref{germc} are in principle different, but we keep the same notations for simplicity).
Let us denote by $Q^k_0,\,k\geq 3,$ the polynomials $Q^k$, as in \eqref{germc}, corresponding to the model hypersurface \eqref{cone}. That is, $$Q^{2j}_0=|z|^2|\zz|^{2j-2},\,j\geq 2,$$ and $$Q^{2j+1}_0=\frac{1}{2}(z^2\bar\zeta+\bar z^2\zeta)|\zz|^{2j-2},\,j\geq 1.$$
The assertion of the proposition reads then as
\begin{equation}\Label{Qk}
Q^k=Q^k_0+O(|z|^3)+O(|z||\bar w|)+O(|\bar w|^2).
\end{equation}
Recall that the Levi determinant in terms of the complex defining equation equals to 
$$\begin{vmatrix}
\theta_{\bar z} & \theta_{\bar \zz} & \theta_{\bar w} \\
\theta_{z\bar z} & \theta_{z\bar \zz} & \theta_{z\bar w} \\
\theta_{\zz\bar z} & \theta_{\zz\bar \zz} & \theta_{\zz\bar w} 
\end{vmatrix}$$
(see \cite{ber}). 
Accordingly, the uniform Levi degeneracy condition for $M$ reads as 
\begin{equation}\Label{det}
\begin{vmatrix}
2iz+2i\sum Q^k_{\bar z} & 2i\sum Q^k_{\bar \zz}  & 1+2i\sum Q^k_{\bar w}  \\
2i+2i\sum Q^k_{z\bar z} & 2i\sum Q^k_{z\bar \zz}  & 2i\sum Q^k_{z\bar w} \\
2i\sum Q^k_{\zz\bar z} & 2i\sum Q^k_{\zz\bar \zz}  & 2i\sum Q^k_{\zz\bar w}
\end{vmatrix}\equiv 0.
\end{equation}
We then inspect a series of identities obtained by collecting in \eqref{det} terms of a fixed weight $k-2,\,k\geq 3$ {\em with respect to the standard grading \eqref{initweights}}. For $k=3$ such an identity  holds in view of \eqref{Q3}. However, for $k\geq 4$ we obtain non-trivial conditions involving the polynomials $Q^3,...,Q^k$. It is not difficult to see by expanding the determinant  that these conditions have the form:
\begin{equation}\Label{iterative}
-4Q^k_{\zz\bar\zz}=\cdots,
\end{equation} 
where dots stand for a polynomial expression in the variable $z$ and the polynomials $Q^j,\,j<k$. 

We will prove \eqref{Qk} by induction in $k\geq 3$. For $k=3$ it holds in view of the above discussed formula for $Q^3$. For the induction step, we assume that \eqref{Qk} holds for all $j\leq k$, and consider the identity obtained by collecting in \eqref{det} terms with weight $k-1$. In view of \eqref{iterative}, this identity has the form
\begin{equation}\Label{iterative1}
-4Q^{k+1}_{\zz\bar\zz}=\cdots
\end{equation} 

We claim that, in fact, \eqref{iterative1} has the more specific form 
\begin{equation}\Label{iterative2}
-4Q^{k+1}_{\zz\bar\zz}=-4\left(Q^{k+1}_0\right)_{\zz\bar\zz}+\cdots,
\end{equation} 
where dots stand for terms of the form 
\begin{equation}\Label{remainder}
O(|z|^3)+O(|z||\bar w|)+O(|\bar w|^2).
\end{equation}
To prove the claim, we apply the representation \eqref{Qk} for $Q^j$ with $j\leq k$ in the right hand side of \eqref{iterative1} (which is possible due to the induction assumption) and conclude that we have two kinds of terms arising: those arising from $Q^j_0$ with $j\leq k$ only (that is, terms obtained by multiplying that in the derivatives of $Q^j_0$), and all the others. 

For the first group of terms, we note that they must have simply the form $\left(Q^{k+1}_0\right)_{\zz\bar\zz}$. This is concluded from considering the equation \eqref{det} for the model hypersurface \eqref{cone} itself.

To analyze the second group of terms, we first observe that the $(1,2)$ and the $(3,2)$ entries of the determinant  \eqref{det} already have the form
\eqref{remainder}. This follows from the induction assumption and the invariance of \eqref{remainder} under differentiation in $\z,\bar\z$.
Hence it is enough to consider only the terms coming from the two "diagonal" products ($a_{11}a_{22}a_{33}$ and $a_{31}a_{22}a_{13}$) within the expansion of the determinant \eqref{det}. By the definition of the second group of terms under consideration and the induction assumption, each such term coming from the two diagonal products must contain at least one derivative of an expression $$O(|z|^3)+O(|z||\bar w|)+O(|\bar w|^2)$$ (as in \eqref{remainder}), and the derivation in the group of variables $z,\bar z,\bar w$ happens at most once. Furtheremore, since the total weight is $k-1$, the latter derivative gets necessarily multiplied by at least one derivative of either the same kind of expression, or an expression $O(|z|^2)$ (as follows from \eqref{remainder} and the induction assumption), and the derivation in the group of variables $z,\bar z,\bar w$ again happens at most once. The product of two such derivatives automatically has the form $O(|z|^3)+O(|z||\bar w|)+O(|\bar w|^2)$,  and this proves the claim.

It remains to deal with the identity \eqref{iterative2} that holds true according to the claim. Integrating this identity in $\zz,\bar\zz$, we get:
\begin{equation}\Label{almost}
Q^{k+1}=Q^{k+1}_0+R_1+R_2,
\end{equation}
where a remainder $R_1$ has the form \eqref{remainder}, while $R_2$ satisfies $\bigl(R_2\bigr)_{\zz\bar\zz}=0$. In view of the absence of the terms \eqref{killed}, this gives $R_2\equiv 0$, and implies \eqref{Qk}. Proposition is proved now.

\end{proof}

As the final outcome of this subsection, we may restrict our considerations to (formal or analytic) hypersurfaces of the kind \eqref{perturb} 
(that is, to "good" perturbations of the light cone \eqref{cone} in the weight system \eqref{weights}). That is why in what follows we use the weight system \eqref{weights} only. 

\subsection{Normalization of initial terms of a CR-map} Let $M,M^*$ be two (formal or analytic) prenormalized hypersurfaces of the kind \eqref{perturb}, and 
$$H=(f,g,h):\,\,(M,0)\lr(M^*,0)$$
be a (formal) invertible holomorphic map between them. Let us first note that the representation \eqref{perturb} is invariant under the group of scalings \eqref{scalings}.

The main goal of this subsection is to prove the following
\begin{proposition}\Label{initterms}
There exists a scaling $\Lambda$, as in \eqref{scalings}, such that $H$ can be decomposed as $H=\tilde H\circ\Lambda$, where the (formal) map $\tilde H$ has the form:
\begin{equation}\Label{normalmap}
z\mapsto z+f_2+f_3+\cdots, \quad \zz\mapsto \zz+g_1+g_2+\cdots, \quad h=w+h_3+h_4+\cdots,
\end{equation}
where $f_j,g_j,h_j$ are formal power series of weight $j$ with respect to the grading \eqref{weights}.
\end{proposition}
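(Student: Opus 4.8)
The plan is to strip from $H$ its lowest-weight (``leading'') part with respect to the grading \eqref{weights}. I will show that this leading part is forced to be a weight-preserving CR-automorphism of the homogeneous model $v=P$ fixing the origin, that by the description of $\g$ in Section~2.1 such an automorphism must be a scaling $\Lambda$ as in \eqref{scalings}, and that $\tilde H:=H\circ\Lambda^{-1}$ then has the form \eqref{normalmap}. To begin, using \autoref{goodperturb} write $M$ as $v=P(z,\zz,\bar z,\bar\zz)+\Phi$ and $M^*$ as $v=P+\Phi^*$ with $\Phi,\Phi^*$ of weight $\geq 3$, and expand $f=\sum_j f_j$, $g=\sum_j g_j$, $h=\sum_j h_j$ into weighted-homogeneous components for \eqref{weights}. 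Since the only holomorphic monomials of weights $0$, $1$, $2$ are $\zz^b$, $z\zz^b$, and $z^2\zz^b$ or $w\zz^b$ respectively, the leading components have the shape $f_0=f_0(\zz)$, $f_1=z\,\phi_1(\zz)$, $g_0=g_0(\zz)$, $h_0=h_0(\zz)$, $h_1=z\,\psi_1(\zz)$, $h_2=z^2\,\sigma_2(\zz)+w\,\rho_2(\zz)$. On $M$ the mapping condition reads
\[
\Im h=P(f,g,\bar f,\bar g)+\Phi^*(f,g,\bar f,\bar g,\Re h).
\]

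The crucial point is that $f_0\equiv h_0\equiv 0$. Here I would use that $H$, being a CR-isomorphism, preserves the Levi-kernel distribution $K$, hence the foliation it defines; and for a prenormalized hypersurface \eqref{perturb} the leaf through the origin is the complex disc $\{z=w=0\}$ --- indeed $\{z=w=0\}\subset M$ because $P$ and $\Phi$ vanish there, and along it the Levi form of $M$ degenerates exactly in the $\partial/\partial\zz$-direction, so this disc is an integral curve of $K$. Consequently $H(\{z=w=0\})=\{z^*=w^*=0\}$, which gives $f(0,\zz,0)=h(0,\zz,0)=0$, i.e.\ $f_0\equiv h_0\equiv 0$. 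Next, since $f$ then starts in weight $1$ and $h$ in weight $2$, the right-hand side of the mapping condition has no term of weight $1$, so $\Im h_1\equiv 0$; as $h_1=z\,\psi_1(\zz)$ is holomorphic, $h_1\equiv 0$. Invertibility of $dH_0$ now forces $\phi_1(0)\neq 0$, $\rho_2(0)\neq 0$ and $g_0'(0)\neq 0$, so $f,g,h$ have leading weights exactly $1,0,2$ and the leading map $H^{(0)}:=\bigl(z\,\phi_1(\zz),\,g_0(\zz),\,z^2\,\sigma_2(\zz)+w\,\rho_2(\zz)\bigr)$ is a weight-preserving germ of biholomorphism fixing the origin.

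Comparing terms of weight $2$ in the mapping condition --- in which $\Phi^*$ still contributes nothing --- shows that $H^{(0)}$ carries the leading hypersurface $\{v=P\}$ of $M$ onto that of $M^*$; that is, $H^{(0)}$ is a weight-preserving automorphism of the rational model \eqref{cone} fixing $0$. By the graded decomposition of $\g$ from Section~2.1, its weight-$0$ component is $\g_0=\g_0^c\oplus\g_0^s$ (see \eqref{g0}), and the flows of $\g_0^s$ move the origin along the leaf $\{z=w=0\}$; hence the origin-preserving weight-$0$ automorphisms are precisely the scalings \eqref{scalings}. Therefore $H^{(0)}=\Lambda$ for some $\Lambda$ as in \eqref{scalings}, and $\tilde H:=H\circ\Lambda^{-1}$ has all leading components equal to $z$, $\zz$, $w$, i.e.\ it has the form \eqref{normalmap}.

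I expect the main obstacle to be the step $f_0\equiv h_0\equiv 0$: it cannot be extracted from the mapping condition and the invertibility of $H$ alone, and genuinely requires the CR-invariance of the Levi-kernel foliation together with the identification of its leaf through the base point with the disc $\{z=w=0\}$ --- an identification that has to be verified for every prenormalized hypersurface \eqref{perturb}, not just for the model itself.
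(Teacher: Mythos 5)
Your overall strategy---expanding $H$ into weighted-homogeneous components for \eqref{weights}, showing that the lowest-weight part is an automorphism of the model, and identifying it with a scaling---is the same as the paper's, and your weight-$1$ and weight-$2$ steps match the paper's almost verbatim. The one place where you genuinely diverge is the decisive step $f_0\equiv h_0\equiv 0$, and there your argument is left incomplete. The paper handles this step purely algebraically from the weight-$0$ part of the basic identity: since $M^*$ is prenormalized, every monomial of $P+\sum_j\Phi^*_j$ contains both a holomorphic and an antiholomorphic factor, so after the substitution $z\mapsto f_0(\zz)$, $\zz\mapsto g_0(\zz)$, $u\mapsto\re h_0$ the right-hand side of \eqref{weight0} has no harmonic terms; equating it with the harmonic function $\im h_0(\zz)$ forces $h_0=0$, and then, if $m=\ord_0 f_0<\infty$, the Hermitian term $|f_0|^2$ in $P$ contributes a nonzero $\zz^m\bar\zz^m$-coefficient which nothing else can cancel (all $\Phi^*_j$ have weight at least $3$), whence $f_0=0$. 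Your geometric route via the Levi-kernel foliation is viable---one can indeed check that for a hypersurface \eqref{perturb} the disc $\{z=w=0\}$ lies in $M$ and that the Levi form along it degenerates exactly in the $\partial/\partial\zz$-direction, so this disc is the leaf through $0$---but, as you yourself note, you have not carried out that verification, and it is precisely the content of the step; moreover, since the proposition concerns \emph{formal} maps between \emph{formal} hypersurfaces, the assertion that $H$ preserves the foliation and hence maps leaf to leaf requires a formal-algebraic justification (preservation of the ideal generated by $z$ and $w$) rather than a pointwise geometric one. So the proposal is correct in outline, but the paper's argument for the key step is both more elementary and actually complete. A smaller point: at the end one must still verify (as the paper does by substituting into \eqref{weight2} and comparing $z^2\bar\zz$-coefficients) that the three scaling factors of the leading map satisfy the relations $\mu=e^{2i\varphi}$, $\nu=r^2$ built into \eqref{scalings}; you implicitly outsource this to the description of $G_0^c$ in Section~2.1, which is acceptable but should be made explicit.
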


\begin{proof}
The basic identity for the map $H$ gives:
\begin{equation}\Label{basic}
\begin{aligned}
\im h=P(f,g,\bar f,\bar g)+&\sum_{j\geq 3}\Phi^*_j(f,g,\bar f,\bar g,\re h), \\ 
f=f(z,\zz,w),\,g=g(z,\zz,w),\,h=h(z,\zz,w),\, & w=u+iP(z,\zz,\bar z,\bar\zz)+i\sum_{j\geq 3}\Phi_j(z,\zeta,\bar z,\bar\zz,u).
\end{aligned}
\end{equation}

Comparing in \eqref{basic} terms of weight $0$, we get 
\begin{equation}\Label{weight0}
\im h_0(\zz)=P(f_0(\zz),g_0(\zz),\bar f_0(\bar \zz),\bar g_0(\bar\zz))+\sum_{j\geq 3}\Phi^*_j(f_0(\zz),g_0(\zz),\bar f_0(\bar\zz),\bar g_0(\bar\zz),\re h_0(\zz))
\end{equation}
(note that weight $0$ components can depend on $\zz$ only). 
Recall that both hypersurfaces are prenormalized, hence the right hand side in \eqref{basic} does not contain pluriharmonic terms  and we conclude that $h_0=0$. (In what follows, we mean by pluriharmonic terms all terms which are real parts of holomorphic functions in $z,\zeta$, where the real variable $u$ is treated as a parameter). We next claim that $f_0=0$. Indeed, asssume that $m:=ord_0\,f_0(\zz)<\infty$. Then the hermitian term in $P$ gives in the right hand side of \eqref{weight0}  a non-zero term with $\zz^m\bar\zz^m$. On the other hand, no other terms in the right hand side of \eqref{weight0} contribute to  $\zz^m\bar\zz^m$ (this follows from $h_0=0$ and from the fact that all $\Phi^*_j$ have weight at least $3$). Thus we conclude that $f_0=0$. Finally, since the map $H$ is invertible, we can claim that $g_0(\zz)$ has a nonzero linear part. 

We next collect terms of weight $1$ in \eqref{basic}. Since we have $f_0=h_0=0$, this simply gives: $\im h_1(z,\zz)=0$, so that $h_1=0$. For the components $f_1,g_1$ we observe that they have respectively the form $zF_1(\zz),zG_1(\zz)$. The invertibility of the map also gives $F_1(0)\neq 0$ and $h_w(0,0,0)\neq 0$.

As the last step, we collect in \eqref{basic} terms of weight $2$. This gives:
\begin{equation}\Label{weight2}
\begin{aligned}
\im h_2=P(f_1,g_0,\bar f_1,\bar g_0) ,\,\,\,
w=u+iP(z,\zz,\bar z,\bar\zz).
\end{aligned}
\end{equation}
We conclude that the invertible map
$$(z,\zz,w)\lr \bigl(f_1(z,\zz),g_0(\zz),h_2(z,\zz,w)\bigr)$$
is an automorphism of the light cone \eqref{cone}. In view of the explicit description given by \eqref{g0},\eqref{g+},\eqref{isotropy}, this implies (integrating $\mathfrak g_0^c$):
$$f_1=\lambda z,\,\,g_0=\mu\zz,\,\,h_2=\nu w, \quad \lambda,\mu,\nu\neq 0.$$ 
Substituting the latter into \eqref{weight2} and comparing terms with $z^2\bar\zz$, we obtain:
\begin{equation}\Label{munu}
\mu=e^{2i\varphi},\quad \nu=r^2,\,\,\mbox{where}\,\,\lambda=re^{i\varphi},\,\,r>0,\varphi\in\RR{}.
\end{equation}
The identity \eqref{munu} (together with the earlier obtained description of the weighted components of the map $H$) implies the assertion of the proposition.
\end{proof}
We end this subsection by noting that any map $H$, as in \eqref{normalmap}, can be further decomposed in a unique way as \begin{equation}\Label{factord}
H=\tilde H\circ\psi,
\end{equation}
 where $\psi$ is an automorphism of the light cone \eqref{cone} belonging to the subgroup \eqref{g+},  and the map $\tilde H=(z+f,\zz+g,w+h)$ has the form \eqref{normalmap} and satisfies, in addition,
\begin{equation}\Label{specialmap}
f_{zz}=0,\quad \re h_{ww}=0.
\end{equation}
That is why, in what follows, {\em for the normalization procedure we may consider only maps of the special form \eqref{specialmap}}. Collecting together the normalization conditions for the map, we get the factorization
\begin{equation}\Label{factored}
H=\tilde H\circ\psi,
\end{equation}
where $\psi$ is an element of the stability group  of the light cone \eqref{cone},  and the map $\tilde H=(z+f,\zz+g,w+h)$ has the form \eqref{normalmap} and satisfies, in addition, \eqref{specialmap}. In fact, we will show later that a normalizing transformation $\tilde H$, as in \eqref{factored}, is already {\em unique}, so that \eqref{factored} can be seen as an {\em action of the stability group of the light cone \eqref{cone} on the normal forms}.

\subsection{Homological operator and formal normal form}
Let us consider a map $H:\,\,(M,0)\lr(M^*,0)$ between two prenormalized (formal) hypersurfaces $M,M^*$, which is decomposed as in \eqref{normalmap} and satisfies, in addition, \eqref{specialmap}. Let us consider, for each fixed $m\geq 3$, the equation obtained by collecting in the basic identity \eqref{basic} all terms of weight $m$. In view of \eqref{perturb},\eqref{normalmap}, it is not difficult to see that such an identity must have the following form:
\begin{equation}\Label{homolog}
\re\Bigl(ih_m(z,\zz,w)+2f_{m-1}(z,\zz,w)P_z+2g_{m-2}(z,\zz,w)P_{\zz}\Bigr)|_{w=u+iP}=\Phi^*_m-\Phi_m+\cdots,
\end{equation}
where $\Phi^*_j,\Phi_j$ are as in \eqref{perturb} and dots stand for a polynomial expression in: (i) $\Phi^*_j,\Phi_j$ with $j<m$ and their derivatives of order $\leq m-1$; (ii) the collections $(f_{j-1},g_{j-2},h_j)$ with $j<m$ and their derivatives of order $\leq m-1$; (iii) the local coordinates $z,\zz,\bar z,\bar\zz,u$. This leads to the consideration of the {\em homological operator}
\begin{equation}
\mathcal L(f,g,h):=\re\Bigl(ih(z,\zz,w)+2f(z,\zz,w)P_z+2g(z,\zz,w)P_{\zz}\Bigr)|_{w=u+iP},
\end{equation}
defined on the linear space $\mathcal V$ of tuples $\bigl(f(z,\zz,w),g(z,\zz,w),h(z,\zz,w)\bigr)$ of the kind
\begin{equation}\Label{source}
f=f_2+f_3+\cdots,\,\,g=g_1+g_2+\cdots,\,\,h=h_3+h_4+\cdots
\end{equation}
satisfying, in addition, \eqref{specialmap}, and valued in the space $\mathcal W$ of series $\Phi(z,\zz,\bar z,\bar\zz,u)$ of the kind
$$\Phi=\Phi_3+\Phi_4+\cdots.$$

We shall recall now that the normal form space $\mathcal N$, as in \eqref{nspace} is given in more detail (using Proposition \ref{goodperturb}) by the conditions:
\begin{equation}\Label{Nspace}
\Phi_{\a \b 00} = \Phi_{\a \b 10} = \Phi_{\a \b 2 0} = 0
\end{equation}
for all $\a, \b$, then
\begin{equation}\Label{Nspace2}
\Phi_{4011} = \Phi_{3001} = \Phi_{4001} = 0
\end{equation}
and finally
\begin{equation}\Label{Nspace3}
\Phi_{3030} =  \Phi_{3011} = 0.
\end{equation}
Let us then take into consideration the range $\mathcal R$ of the operator $\mathcal L$. We shall prove the following
\begin{proposition}\Label{directsum}
The operator $\mathcal L$ is injective on $\mathcal V$. Moreover, the target space $\mathcal W$ described above can be decomposed as the direct sum
$$\mathcal W=\mathcal R\oplus\mathcal N,$$
where $\mathcal N\subset\mathcal W$ is the above normal form space.
\end{proposition}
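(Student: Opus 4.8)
The plan is to use that $\mathcal L$ is homogeneous with respect to the weight system \eqref{weights}. The substitution $w=u+iP$ preserves weight (since $[u]=[P]=[w]=2$), and $\frac{i}{2}h$, $fP_z$, $gP_{\zz}$ have weights $[h]$, $[f]+1$, $[g]+2$; hence $\mathcal L$ carries the weighted component $\mathcal V_m$ (the triples $(f_{m-1},g_{m-2},h_m)$ subject to \eqref{specialmap}) into the space $\mathcal W_m$ of weight-$m$ real series. Writing $\mathcal V=\bigoplus_{m\ge3}\mathcal V_m$ and $\mathcal W=\bigoplus_{m\ge3}\mathcal W_m$, the proposition reduces to showing, for each fixed $m$, that $\mathcal L\colon\mathcal V_m\to\mathcal W_m$ is injective and that $\mathcal W_m=\mathcal R_m\oplus\mathcal N_m$, where $\mathcal R_m=\mathcal L(\mathcal V_m)$ and $\mathcal N_m=\mathcal N\cap\mathcal W_m$.

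For injectivity, a direct computation of $(X+\bar X)(\im w-P)$ on $M_0$ shows that $\mathcal L(f,g,h)=0$ is equivalent to $X+\bar X$ being tangent to the model $M_0=\{\im w=P\}$ for $X=f\,\frac{\partial}{\partial z}+g\,\frac{\partial}{\partial\zz}+h\,\frac{\partial}{\partial w}$, i.e.\ $X\in\g$, the algebra of Section~2.1 (finite-dimensional, $M_0$ being $2$-nondegenerate). If $(f,g,h)\in\mathcal V_m\cap\ker\mathcal L$, then, $M_0$ being homogeneous for \eqref{weights}, $X$ is weighted-homogeneous of derivation-weight $m-2$ in the grading \eqref{grading}, so $X\in\g_{m-2}$. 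For $m\ge5$ this is $0$; for $m=3$, inspection of \eqref{g+} shows every nonzero element of $\g_1$ has $f_{zz}(0)\ne0$, and for $m=4$ every nonzero element of $\g_2$ has $\re h_{ww}(0)\ne0$, both excluded by \eqref{specialmap}. Hence $(f,g,h)=0$.

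For the direct-sum decomposition I would substitute $w=u+iP$ into $\mathcal L(f,g,h)$, use
$$
P_z=\frac{\bar z+z\bar\zz}{1-\zz\bar\zz},\qquad
P_{\zz}=\frac{\bar z^{2}}{2(1-\zz\bar\zz)}+\frac{\bar\zz\,P}{1-\zz\bar\zz},
$$
and sort the monomials of the target by their total degree in the antiholomorphic variables $\bar z,\bar\zz$. With respect to this ordering $\mathcal L$ is triangular, and the three components govern successive levels: the $(\bar z,\bar\zz)$-degree-$0$ coefficients $\Psi_{kl00}$ of \eqref{Nspace} (and their conjugates) are controlled by $\re\!\bigl(i\,h(z,\zz,u)\bigr)$, the $\bar z$-degree-$1$ coefficients $\Psi_{kl10}$ by $2f(z,\zz,u)\,\bar z$ modulo the previous level, and the $\bar z$-degree-$2$ coefficients $\Psi_{kl20}$ by $g(z,\zz,u)\,\bar z^{2}$ modulo the previous two. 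Solving these linear systems successively — an order-by-order procedure, since each level introduces only finitely many new unknowns — produces, for any $\Phi\in\mathcal W$, a $v\in\mathcal V$ with $\Phi-\mathcal L(v)$ annihilating every $\Psi_{kl00},\Psi_{kl10},\Psi_{kl20}$. The part of $v$ that does not influence these three families is, on $\mathcal V$ constrained by \eqref{specialmap}, finite-dimensional and is carried by $\mathcal L$ bijectively onto a finite-dimensional complement of $\mathcal N_m$ inside this generic normal form space; choosing it suitably puts $\Phi-\mathcal L(v)$ into $\mathcal N$. This gives $\mathcal W=\mathcal R+\mathcal N$, and combining it with the injectivity of $\mathcal L$ (which forces the resulting $v$, and hence the decomposition, to be unique) yields $\mathcal R\cap\mathcal N=0$.

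I expect the main obstacle to be the bookkeeping behind the last two sentences: one must verify that the three ``principal symbols'' above are genuinely onto the respective coefficient families, with kernels accounting exactly for the constraints \eqref{specialmap} together with the surplus freedom in the low-weight jets of $f,g,h$, and that the only coefficients outside those families still reachable by $\mathcal L$ are precisely the ones prescribed in \eqref{Nspace2} and \eqref{Nspace3}. This requires carefully tracking the mixing of the three components through the denominators $(1-\zz\bar\zz)^{-1}$, through the term $-\frac12P^{2}h_{ww}(z,\zz,u)$ in the expansion of $h(z,\zz,u+iP)$ (which is what ties $h$ to the diagonal coefficients such as $\Psi_{3030}$), and through the cross terms $z^{2}\bar\zz$, $\bar z^{2}\zz$ of $P$ — these last being responsible for feeding $f,g,h$ into the slots $\Psi_{kl01}$ and thereby for the extra conditions $\Psi_{3001}=\Psi_{4001}=\Psi_{3011}=\Psi_{4011}=0$.
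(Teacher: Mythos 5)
Your reduction to fixed weight and your injectivity argument are sound, and the latter is genuinely different from (and slicker than) what the paper does: the paper never invokes the identification of $\ker\mathcal L$ with the infinitesimal automorphism algebra $\g$, but instead obtains injectivity as a byproduct of an explicit computation. Your route does require one ingredient you should make explicit: since $\mathcal V$ consists of formal series, you need to know that every \emph{formal} weighted-homogeneous infinitesimal automorphism of the model lies in the Fels--Kaup algebra \eqref{g-}--\eqref{g+} (equivalently, that formal infinitesimal automorphisms of \eqref{cone} are convergent); this holds by finite jet determination for $2$-nondegenerate minimal hypersurfaces, but it is an imported theorem, not a triviality.

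The decomposition $\mathcal W=\mathcal R\oplus\mathcal N$ is where the proposal falls short of a proof. Your triangular scheme (levels $\bar z^0$, $\bar z^1$, $\bar z^2$ governed by $h$, $f$, $g$ respectively) is exactly the paper's first pass, and it does determine $h_{kl}$, $f_{kl}$, $g_{kl}$ for all but finitely many index pairs. But the entire substance of the proposition is the claim you defer to ``bookkeeping'': that the residual unknowns --- in the paper's notation the coefficient functions $f_{00},g_{00},g_{10},\Re h_{00},\Im f_{10},\Im g_{20},\dots$ left free by the first pass after imposing \eqref{specialmap} --- are matched \emph{bijectively} against the remaining normal form conditions \eqref{Nspace2}--\eqref{Nspace3}. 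This is not a routine check: in the paper it occupies equations \eqref{e22}--\eqref{e26} together with \eqref{e2}--\eqref{e4}, \eqref{e8}, \eqref{e19}, \eqref{e20}, and culminates in two coupled systems (one for $f_{00},g_{10}$, one for $\Re h_{00},\Im g_{20}$) whose nondegeneracy must be verified by hand; had the model $P$ carried slightly different coefficients, the matching could fail. Relatedly, your final inference is off: injectivity of $\mathcal L$ on $\mathcal V$ does \emph{not} by itself imply $\mathcal R\cap\mathcal N=0$ --- for that you need injectivity of the composite $\mathcal V\to\mathcal W\to\mathcal W/\mathcal N$, which is again precisely the unique solvability of the full system above. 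So the proposal correctly identifies the architecture of the paper's argument but omits its load-bearing computation.
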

\begin{proof}
The assertion of the proposition can be reformulated like that: an equation
\begin{equation}\Label{homeq}
2\mathcal L(f,g,h)=\Psi,\,\,\,\Psi\in\mathcal W
\end{equation}
has a unique solution in $\mathcal V$, modulo an element of the space $\mathcal N$ standing in the right hand side of \eqref{homeq} (which is obtained by the projection of $\Psi$ onto $\mathcal N$ parallel to  $\mathcal R$).

To solve an equation \eqref{homeq}, let us expand
\begin{equation}\Label{expandf}
f(z,\zz,u+iP)=f(z,\zz,u)+f_w(z,\zz,u)iP-\frac{1}{2}f_{ww}(z,\zz,u)P^2-\frac{i}{6}f_{www}(z,\zz,u)P^3+\cdots,
\end{equation}
and similarly for $g,h$. Further, we use the expansion
\begin{equation}\Label{fk}
f(z,\zz,u)=\sum_{k,l\geq 0}f_{kl}(u)z^k\zz^l,
\end{equation}
and similarly for $g,h$.
We now substitute \eqref{expandf},\eqref{fk} into \eqref{homeq}, and apply   {\em all} the linear functionals, annihilating the above subspace $\mathcal N$, to the resulting identity. This amounts to collecting the respective terms in \eqref{homeq}, which we do step-by-step. We write down the corresponding homogeneous equations. In the inhomogeneous system, the right hand side becomes $\Psi_{kl\alpha \beta}$ for the corresponding values of  $k,l,\alpha, \beta$.

Collecting all terms of the kind $z^k\zz^l\bar z^0\bar\zz^0u^j$ gives: 
\begin{equation}ih_{kl}(u)=0,\,\,k,l\geq 0, \label{e1} \end{equation} 
except for $(k,l) = (0,0), (1,0), (2,0)$.
For these values we obtain, respectively,
\begin{equation} i h_{00 } -  i \bar h_{00 } = 0,\label{e2} \end{equation}
\begin{equation} 2 \bar f_{00 } + i  h_{10 } = 0,\label{e3} \end{equation}
\begin{equation} \bar g_{00 } +  i  h_{20 } = 0. \label{e4} \end{equation}
For terms of the form $z^k\zz^l\bar z^1\bar\zz^0u^j,\,$ we get for $k > 0$ and $ l\geq 0$: 
\begin{equation}-h'_{k-1,l}+2f_{kl}=0,\,\,k,l\geq 0,\label{e5} \end{equation}
except when  $(k,l) = (1,0),  (2,0),  (3,0),  (1,1).$ 
%In this case 
If $k = 0$ and $l >1$ we obtain 
\begin{equation} 2 f_{0l} = 0.\label{e6} \end{equation}
For   $z \bar z \z $ terms, i.e. $(k,l) = (0,1)$ we obtain
\begin{equation}  2 \bar g_{00} + 2 f_{11} - h'_{01} = 0.\label{e7} \end{equation}
Further, for $z \bar z $ terms, i.e. $(k,l) = (1,0)$,  we have
\begin{equation} - h'_{00} - \bar  h'_{00} + 2f_{10} + 2\bar f_{10} = 0.\label{e8} \end{equation} 
For $\z \bar z$ terms , i.e. $(k,l) = (0,1)$, we obtain
\begin{equation} 2 f_{01} + 2 \bar f_{00}= 0,\label{e9} \end{equation}
and for $z^2 \bar z $ terms , i.e. $(k,l) = (2,0)$,
\begin{equation} - h'_{10} + 2 f_{20} - 2i  \bar f'_{00} + \bar g_{10}= 0.\label{e10} \end{equation}
Finally for $z^3 \bar z$ terms, i.e. $(k,l) = (3,0)$, we obtain
\begin{equation}  -i  \bar g'_{00} + 2 f_{30} - h'_{20} = 0.\label{e11} \end{equation}

Next, consider terms of the form $z^k\zz^l\bar z^2\bar\zz^0u^j$. For $ k > 1$ and $l>0$, except for $(k,l) = (2,1)$,  we obtain 
\begin{equation}- \frac12 h'_{k, l-1} - \frac{i}{2}  h''_{k-2, l} + 2i  f'_{k-1, l} + g_{kl} = 0.   \label{e12} \end{equation}
When $k > 4$ and $l=0$, i.e. for the coefficients of $z^k \bar z^2$ we have
\begin{equation}  - \frac{i}{2}  h''_{k-2, 0} + 2i  f'_{k-1, 0} + g_{k0} = 0.  \label{e13} \end{equation} 
If  $k = 1$ and $l>0$ i.e. for the coefficients of $z \z^l  \bar z^2$, except for $(k,l) = (1,1)$,  we have 
\begin{equation}  - \frac{1}{2}  h'_{1, l-1} + 2i  f'_{0, l} + g_{1l} = 0.  \label{e14} \end{equation} 
Finally, if $k = 0$ and $l>2$, i.e. for $\z^l \bar z^2  $ we obtain 
\begin{equation}  - \frac{1}{2}  h'_{0, l-1}  + g_{0l} = 0.  \label{e15} \end{equation} 
Further, for $(k,l) = (0,2)$ we obtain  
\begin{equation}\bar g_{00}- \frac12 h'_{0, 1}  + g_{02} = 0.   \label{e16} \end{equation}
For $(k,l) = (1,1)$ we obtain  
\begin{equation} -3i \bar f'_{00} + 2\bar g_{10}  - \frac{1}{2}  h'_{10} + 2i  f'_{01} + g_{11} = 0.   \label{e17} \end{equation}
For $(k,l) = (3,0)$ we obtain  
\begin{equation}-i  \bar g'_{10} - \frac{i}{2}  h''_{10} + 2i  f'_{20} - \bar f_{00}'+ g_{30} = 0.   \label{e18} \end{equation}
For $(k,l) = (4,0)$ we obtain  
\begin{equation} - \frac{i}{2}  h''_{20} + 2i  f'_{30} - \bar g''_{00} + g_{40} = 0.   \label{e18A} \end{equation}
For $(k,l) = (0,1)$ we obtain
\begin{equation}- \frac{1}{2} h'_{00} - \frac{1}{2}  \bar h'_{00} +  2 \bar f_{10} + g_{01} = 0.   \label{e19} \end{equation}
For $(k,l) = (2,0)$ we obtain
\begin{equation} \frac{i}2 \bar h''_{00} - \frac{i}{2}  h''_{00} + 2i  f'_{10} + g_{20} -2i \bar f'_{10}  + \bar g_{20} = 0.\label{e20} \end{equation}
For $(k,l) = (2,1)$  we obtain
\begin{equation} -\frac52 i \bar g'_{00} - \frac{1}{2}  h'_{20} -  \frac{i}2  h'_{00}  + 2i f'_{11} + g_{21} = 0.   \label{e21} \end{equation}
% Further, for terms  of the form $z \bar z \z \bar\z u^j$ we have: 
% \begin{equation}\Re (- h'_{00 }+2f_{10} + 2 g_{10})  =0 \label{} \end{equation}
Collecting all terms of the kind $z^3 \bar\z u^j$ gives: 
\begin{equation}- \frac12 h'_{10 }+2f_{20} - i  \bar f'_{00}  =0. \label{e22} \end{equation}
Collecting all terms of the kind $z^3 \bar z^3  u^j$ gives: 
\begin{equation}\Re (\frac16  h'''_{00 } - f''_{10} + i g'_{20})  =0. \label{e23} \end{equation}
Further, collecting terms with $z^4\bar\zeta u^j$ we get:
\begin{equation}-\frac{1}{2}h'_{20}+2f_{30}-\frac{i}{2}\bar g_{00}'=0.\label{e24} \end{equation}
Collecting all terms of the kind $z^3   \bar z \bar \z     u^j$ gives: 
\begin{equation}- \frac{i}{2}   h''_{00 }+3i f'_{10}  +  2  g_{20}  + \frac{i}{2}  \bar h''_{00} -  i  \bar f'_{10} -i \bar g'_{01}  =0. \label{e25} \end{equation}
Finally, collecting all terms of the kind $z^4  \bar z \bar \z  u^j$ gives: 
\begin{equation}-\frac{i}{2}h_{10}''+3if_{20}'-\bar f_{00}''+2g_{30}-\frac{i}{2}\bar g_{10}'=0.\label{e26} \end{equation}

Now all the terms appearing in the normal form space conditions \eqref{Nspace}, \eqref{Nspace2}, \eqref{Nspace3} are considered, and we have to show that the resulting system of equations for $f_{kl},g_{kl},h_{kl},\,k,l\geq 0$ determines the latter ones uniquely. We do it step-by-step. We also make use of the following 
\smallskip

\noindent{\bf Convention.} In what follows, dots stand for linear expressions in the previously determined   coefficient functions $f_{kl},g_{kl},h_{kl}$ and the given right hand side functions $\Psi_{kl\alpha \beta}$.

\smallskip

\medskip

Equation \eqref{e1} determines  $h_{kl}$ for all $(k,l)$, except for  $h_{00}$,  $h_{10}$  $h_{20}$.
Equation \eqref{e2} determines  $\Im h_{00}$.
Equation \eqref{e3} expresses   $h_{10}$ by $f_{00}$, namely
\begin{equation}
 h_{10} = 2i \bar f_{00} + \dots.
\end{equation}
Equation \eqref{e4} expresses   $h_{20}$ by $g_{00}$, namely
\begin{equation}
 h_{20} = i \bar g_{00} + \dots.
\end{equation}
Equation \eqref{e5} determines   $f_{kl}$ for $ k > 0$,  except for  $f_{10}$,  $f_{20}$  $f_{30}$ and $f_{11}$.
Equation \eqref{e6} determines   $f_{0l}$,   except for  $f_{00}$,  $f_{01}$.
Equation \eqref{e7} expresses   $f_{11}$ by $g_{00}$, namely
\begin{equation}
 f_{11} = -  \bar g_{00} + \dots.
\end{equation}
Equation \eqref{e8} expresses   $\Re f_{10}$ by $\Re h'_{00}$, namely
\begin{equation}
\Re f_{10} = \frac12 \Re h'_{00} + \dots.
\end{equation}
Equation \eqref{e9} expresses   $f_{01}$ by $f_{00}$, namely
\begin{equation}
 f_{01} = -  \bar f_{00}  + \dots.
\end{equation}
Equation \eqref{e10} expresses   $f_{20}$ by $f'_{00}$ and $g_{10}$, namely
\begin{equation}
 f_{20} = 2i \bar f'_{00} - \frac12 \bar g_{10} + \dots.
\end{equation}
Equation \eqref{e11} expresses   $f_{30}$ by $g'_{00}$, using \eqref{e4}, namely
\begin{equation}
 f_{30} = \frac12 h'_{20} + \frac{i}2 \bar g'_{00} + \dots= i \bar g'_{00} + \dots.
\end{equation}
Equation \eqref{e12} determines  $g_{kl}$ for $k>1$ and $l>0$,  except for  $g_{21}$.
Equation \eqref{e13} determines  $g_{k0}$ for $k>3$. In particular,  $g_{40}$
is expressed by  $g'_{00}$, as
\begin{equation}
 g_{40} = \frac{i}2 h''_{20} - 2i   f'_{30} +  \bar g''_{00} + \dots = \frac52 \bar g''_{00} + \dots.
\end{equation}
Equation \eqref{e14} determines  $g_{1l}$ for $l>0$, except for  $g_{11}$. 
Equation \eqref{e15} determines  $g_{0l}$, except for  $g_{00}$, $g_{01}$ and $g_{02}$.
Equation \eqref{e16} expresses   $g_{02}$ by $g_{00}$, namely
\begin{equation}
g_{02} = - \bar g_{00}+\dots.
\end{equation}
 Equation \eqref{e17} expresses   $g_{11}$ by $g_{10}$ and $f_{00}$, namely
\begin{equation}
g_{11} = 6i \bar f'_{00} - 2\bar g_{10}+\dots.
\end{equation}
Equation \eqref{e18} expresses   $g_{30}$ by $g'_{10}$ and $f'_{00}$, namely
\begin{equation}
g_{30} = 2i \bar g'_{10} +4 \bar f''_{00}+ \dots.
\end{equation}
Equation \eqref{e19} gives  $g_{01}$ by $\Im f_{10}$ and $\Re h_{00}$, namely
\begin{equation}
\Im g_{01} = 2 \Im f_{10}  + \dots.
\end{equation}
and 
\begin{equation}
\Re g_{01} = \Re h'_{00} - 2 \Re f_{10} + \dots.
\end{equation}
Equation \eqref{e20} gives  $\Re g_{20}$ by $\Im f_{10}$,  namely
\begin{equation}
\Re g_{20} = 2 \Im f'_{10}+\dots.
\end{equation}
Equation \eqref{e21} gives  $g_{21}$ by $g'_{00}$ and $\Re h_{00}$,  namely
\begin{equation}
g_{21} = \frac52 i \bar g'_{00} + \frac{i}2 \Re h'_{00}+ \dots.
\end{equation}
Next, equations \eqref{e22} and \eqref{e26} give a system for $f_{00}$ and $g_{10}$, namely 
\begin{equation}
2i \bar f'_{00} - \bar g_{10} +\dots= 0
\end{equation}
and 
\begin{equation}
 6 \bar f''_{00} - \frac{i}{2} \bar g'_{10} +\dots= 0.
\end{equation}
Equation \eqref{e24} determines $\bar g'_{00}$, by
\begin{equation}
i \bar g'_{00} + \dots = 0.
\end{equation}
The real part of equation \eqref{e25} determines $\Im f'_{10}$, 
\begin{equation}
-2 \Im f'_{10} + \dots = 0.
\end{equation}
The imaginary part of \eqref{e25} together with \eqref{e23} give a system of two real equations for
$\Re h_{00}$ and $\Im g_{20}$, namely 
\begin{equation}
 \Re h''_{00} +2 \Im g_{20}+\dots= 0
\end{equation}
and 
\begin{equation}
\frac16 \Re h'''_{00} - \Im g'_{20}+\dots  = 0,
\end{equation}
which determines $\Re h_{00}$ and $\Im g_{20}$. 
\end{proof}

\begin{remark}
It was pointed out to us by Joel Merker that the formula \eqref{e18A} in an earlier version of this paper contained a missing term (which doesn't eventually effect our calculations).
\end{remark}

\autoref{directsum} and the relations \eqref{homolog} imply, in the standard manner, the following proposition, which is the first part of \autoref{main}.
\begin{proposition}\Label{formalnf}
For the germ at a point $p$ of any (formal of real-analytic) everywhere $2$-nondegenerate hypersurface in $\CC{3}$, there exists a formal transformation $H:\,(\CC{3},p)\lr(\CC{3},0)$ mapping $M$ into a hypersurface in normal form \eqref{nspace}. In any (formal or holomorphic) local coordinates \eqref{germ}, a normalizing transformation is unique up to the right action of the $5$-dimensional  stability group $G$ of the model \eqref{cone}.
\end{proposition}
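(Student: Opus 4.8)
The plan is to run Chern--Moser's homological scheme in the weight grading \eqref{weights}, with \autoref{directsum} as the engine and \autoref{initterms} together with the factorization \eqref{factord} accounting for the residual freedom. For existence, after replacing $M$ by a prenormalized representative (available via Ebenfelt \cite{ebenfeltC3} and \cite{bk}) and invoking \autoref{goodperturb}, I may assume $M$ is a good perturbation \eqref{perturb} of the light cone, and I then construct directly a normalizing map $H=(z+f,\zz+g,w+h)$ of the special form \eqref{normalmap} satisfying \eqref{specialmap} — i.e. a tuple in $\mathcal V$ — together with the target perturbation $\Phi^{*}=\sum_{m\ge3}\Phi^{*}_{m}\in\mathcal N$, by induction on the weight $m\ge3$. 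Assume the weight-$<m$ components of $(f,g,h)$ and of $\Phi^{*}$ have been fixed. The weight-$m$ part of the basic identity \eqref{basic} is \eqref{homolog}, which I write as
\[
2\,\mathcal L(f_{m-1},g_{m-2},h_{m})=\Phi^{*}_{m}-\Phi_{m}+R_{m},
\]
where, by the triangular structure recorded just before \eqref{homolog}, $R_{m}$ is an already-determined element of the weight-$m$ summand of $\mathcal W$ depending only on the weight-$<m$ data. Since $\mathcal L$ respects the weight grading, \autoref{directsum} refines to each weight, so $\Phi_{m}-R_{m}$ splits uniquely as $r_{m}+n_{m}$ with $r_{m}\in\mathcal R$ and $n_{m}\in\mathcal N$; setting $\Phi^{*}_{m}:=n_{m}$ makes the right-hand side equal to $-r_{m}\in\mathcal R=\mathcal L(\mathcal V)$, whence injectivity of $\mathcal L$ on $\mathcal V$ gives a unique $(f_{m-1},g_{m-2},h_{m})\in\mathcal V$. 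Iterating over $m\ge3$ yields a formal $H$ with $H(M)$ of the form \eqref{model+} with $\Phi^{*}\in\mathcal N$; precomposing with the prenormalizing change of coordinates gives the transformation $(\CC{3},p)\to(\CC{3},0)$ in arbitrary coordinates \eqref{germ}.

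For uniqueness, note first that the induction above shows every good perturbation $M'$ of the light cone has a unique special-form normalization $H_{M'}$, with a well-determined target. Now let $H$ be any normalizing transformation of (prenormalized) $M$. By \autoref{initterms} and \eqref{factord}, $H=H_{0}\circ\sigma$, where $\sigma\in G$ — a scaling \eqref{scalings} composed with an element of $G_{+}$, and since $G=G_{+}\rtimes G_{0}^{c}$ (as $\g_{+}$ is an ideal in the isotropy algebra \eqref{isotropy}) $\sigma$ exhausts $G$ — and $H_{0}$ is of the special form \eqref{specialmap}. Because elements of $G$ fix the light cone and preserve the weight filtration, $\sigma(M)$ is again a good perturbation \eqref{perturb}, so $H_{0}$ must be its special-form normalization $H_{\sigma(M)}$; thus $H$ is completely determined by $\sigma$. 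Conversely $H_{\sigma(M)}\circ\sigma$ is a normalizing transformation for every $\sigma\in G$, and comparing two presentations $H_{\sigma_{1}(M)}\circ\sigma_{1}=H_{\sigma_{2}(M)}\circ\sigma_{2}$ — first matching leading parts to see the $G_{0}^{c}$-factors coincide, then applying the uniqueness clause of \eqref{factord} to the remaining form-\eqref{normalmap} map — forces $\sigma_{1}=\sigma_{2}$. Hence the normalizing transformations of $M$ form a single $G$-orbit, faithfully parametrized by $G$; since $\dim G=5$ this matches the parametrization by $dH_{p}|_{T_{p}M/K_{p}}$ and the transverse second-order derivative in \autoref{main}.

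Granting \autoref{directsum}, the existence step is bookkeeping and the uniqueness step is a matter of tracking the decompositions \eqref{normalmap}, \eqref{factord} and their uniqueness. The substantive difficulty of the whole construction sits one level down, in \autoref{directsum} itself: showing the explicit linear system there is triangular and uniquely solvable modulo $\mathcal N$, which is precisely what forces the shape of the normal-form space \eqref{nspace} — not only the ``almost-harmonic'' conditions \eqref{Nspace} but also the extra vanishings \eqref{Nspace2} and \eqref{Nspace3} on specific low-order coefficients. Within the present proof the one point that deserves care is verifying that $R_{m}$ genuinely involves only weights $<m$, so that the induction closes, and that the freedom extracted by \eqref{factord} is exactly $G$, neither larger nor smaller.
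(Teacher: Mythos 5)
Your proposal is correct and follows essentially the same route as the paper, which derives \autoref{formalnf} from \autoref{directsum} and \eqref{homolog} ``in the standard manner'' without writing out the details: your weight-by-weight induction using the splitting $\mathcal W=\mathcal R\oplus\mathcal N$ and the injectivity of $\mathcal L$, together with the accounting of the residual $G$-freedom via \autoref{initterms} and \eqref{factord}, is exactly that standard argument made explicit. No gaps.
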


\section{Convergence of the formal normal form and applications}

The proof of \autoref{main} is accomplished by \autoref{formalnf} and the following convergence theorem.

\begin{theorem}\Label{converge}
Any formal transformation bringing a uniformly $2$-nodnegenerate real-analytic hypersurface \eqref{germ} to the normal form \eqref{Nspace} -- \eqref{Nspace3} is convergent.
\end{theorem}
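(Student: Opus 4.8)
The plan is to prove convergence by the method of chains, following the strategy of Chern--Moser \cite{chern} and, in the degenerate setting, of Kossovskiy--Zaitsev \cite{generic,cmhyper}. Let $H=(f,g,h)$ be a formal transformation taking a real-analytic prenormalized hypersurface $M$ as in \eqref{germ}--\eqref{perturb} to a hypersurface $M^{*}$ in normal form, and let $\gamma\subset M$ be the chain through $p$ that $H$ carries onto the vertical curve $\Gamma=\{z=\zeta=0,\ v=0\}$ of the model; the tangent of $\gamma$ at $p$ lies in the canonical cone \eqref{cancone} of admissible chain directions. The argument would split into two stages: first, show that a real-analytic $M$ has real-analytic chains, so that a chosen chain can be straightened by a \emph{convergent} coordinate change; second, show that the residual formal normalization — the one fixing the straightened chain — is itself convergent.

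For the first stage I would characterize chains intrinsically on $M$, in the weight system \eqref{weights}, as the solutions of a weighted second-order ODE whose coefficients are produced from the complex defining function $\theta$ of $M$ by differentiation and algebraic operations; the admissible-direction requirement, together with a normalization of the parameter along the curve, singles out a unique solution. Since $M$ is real-analytic this ODE has real-analytic coefficients, so chains are real-analytic and depend analytically on the base point and the admissible direction. One then obtains a convergent coordinate transformation after which the chosen chain becomes $\Gamma$ and, after also normalizing the distinguished parameter, $H$ fixes $\Gamma$ pointwise; it therefore suffices to prove that such an $H$ converges.

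For the second stage I would propagate the normalization data along $\Gamma$: differentiating the basic identity \eqref{basic} (equivalently, the homological relations \eqref{homolog}) in the transverse direction and restricting to $\Gamma$ should yield, weight by weight, a coupled system for the $u$-derivatives of the jet of $H$ along $\Gamma$ and of the normal-form coefficients $\Phi^{*}_{kl\alpha\beta}(u)$ of $M^{*}$. The finiteness of the residual ambiguity — the $5$-dimensional isotropy $G$, with algebra $\g_0^{c}\oplus\g_1\oplus\g_2$ as in \eqref{isotropy} — together with the injectivity of $\mathcal L$ and the explicit complement $\mathcal N$ from \autoref{directsum}, is what should make this system closed: an infinitesimal step along $\Gamma$ followed by re-normalization consumes only the finitely many degrees of freedom parametrized by the isotropy, and the resulting right-hand sides are polynomial in the previously determined data and in the analytic coefficients of $M$. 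A Cauchy-type (majorant) estimate for this hierarchy of finite-dimensional ODEs would then give joint convergence of the family, hence convergence of $H$ at $p$; composing back the analytic straightening of the first stage finishes the proof.

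The hard part will be the second stage, and in particular the coupling: the coefficients of the evolution along $\Gamma$ involve $M^{*}$, which a priori is only a formal hypersurface, so the ODEs for the jet of $H$ along $\Gamma$ and for $\Phi^{*}(\cdot,u)$ cannot be treated separately — one needs a simultaneous induction on weight, with uniform geometric estimates, to conclude that $H$ and $M^{*}$ converge together. A second, more technical, difficulty is intrinsic to the weight system \eqref{weights}: since $[\zeta]=[\bar\zeta]=0$, every weighted-homogeneous piece is a genuine power series in $\zeta,\bar\zeta$ rather than a polynomial, so the bookkeeping of how finitely many isotropy parameters govern the $u$-evolution at each weight must be done keeping the $\zeta$-expansion under uniform control. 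Finally, even setting up the intrinsic chain ODE of the first stage takes some care, since the canonical cone \eqref{cancone} reflects the non-transitivity of the linear isotropy on transverse directions.
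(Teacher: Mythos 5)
Your two-stage skeleton (analytic chains, then convergence of the residual normalization fixing the chain) matches the paper's overall strategy, and your first stage is close in spirit to what is actually done: the paper constructs a direction field on the projectivized bundle $X=\mathbb P(TM/K)$ whose integral curves are the chains, and proves its analyticity (\autoref{analfield}) not from an a priori intrinsic ODE read off $\theta$, but from the observation that the relevant jet data $f_{ww}(0),g_w(0)$ of a formal normalizing map depend \emph{polynomially} on the admissible direction parameter $a$ and analytically on the base point, because they are obtained by solving the finitely many linear equations of \autoref{directsum} at weights $4$ and $5$. If you want to realize your ``weighted second-order ODE'' you will have to supply exactly this kind of argument; asserting that the coefficients are ``produced from $\theta$ by differentiation and algebraic operations'' is not yet a proof that the field is analytic.

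The genuine gap is in your second stage. You propose a coupled hierarchy of ODEs along $\Gamma$ for the jet of $H$ and the target coefficients $\Phi^*_{kl\alpha\beta}(u)$, closed by the finite-dimensionality of the isotropy, and then a majorant estimate. But with the weights \eqref{weights} the data at each fixed weight is \emph{not} finite-dimensional: since $[\zeta]=0$, a single weighted component $f_m$ is a full power series in $\zeta$, so ``finitely many isotropy parameters govern the $u$-evolution at each weight'' is false as stated, and the simultaneous induction with uniform estimates that you defer is precisely the part that would have to carry the whole proof. You flag this difficulty yourself but do not resolve it, so the proposal does not establish convergence. The paper avoids this analysis entirely: it proves convergence \emph{constructively}, exhibiting the normalizing map as a finite composition of manifestly convergent transformations --- straightening the (analytic) chain and the harmonic terms (Step II), gauge and shear transformations determined by the implicit function theorem (Steps III--V), Chern--Moser-type substitutions in which $f$ and $g$ are taken verbatim from blocks of the analytic defining series (Step VI, killing $\Psi_{kl10}=\Psi_{kl20}=0$), the chain property yielding $\Psi_{4001}=\Psi_{4011}=0$ (Step VII), and a nonsingular analytic third-order ODE for the reparametrization along the chain (Step VIII). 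Uniqueness of the formal normal form up to the $5$-dimensional group $G$ then forces every formal normalizing map to coincide with one of these convergent compositions. This is possible because the complement $\mathcal N$ is ``shallow'' in $\bar z,\bar\zeta$ (only the coefficients of $\bar z^0,\bar z^1,\bar z^2$ and finitely many others are constrained), so each condition is met by a transformation read off directly from the analytic data --- no majorants, no weight-by-weight estimates. If you wish to salvage your route you would need to reorganize the hierarchy so that at each step only finitely many \emph{functions of $(\zeta,u)$} evolve, and then prove convergence of the resulting functional ODEs; the paper's argument shows this can be sidestepped altogether.
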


\begin{proof}
We will show that the transformation \eqref{normalmap} bringing a hypersurface $M$, as in \eqref{germ}, to a normal form is convergent. Recall that the factorization \eqref{factord} allows to deal with all the other normalizing transformations.

Our proof of convergence relies on a series of Chern-Moser type simplifications of the defining function, and the construction of the so-called {\em chain field}, integration of which  defines certain canonical curves (chains)  in a 2-nondegenerate hypersurface. For Chern-Moser type modifications we mostly provide key aspects of the procedure  and leave some of the details to the reader (since the modifications that we use are analogous to that by Chern-Moser), while the choice of a chain (and subsequently sending it into the canonical curve \eqref{Gamma}) is a more subtle step in the normalization procedure and we address it in detail. The proof is split into several steps. For each of the Chern-Moser type simplifications, it turns out that the previously achieved normalization conditions are preserved, {\em except} a transformation which is a part of the very last Step VIII below. The latter is discussed in detail in the respective step. 
 
 We keep the notations

\begin{equation}\Label{def}
v=\Phi(z,\z,\bar z,\bar\z,u)=\sum_{k,l,\alpha,\beta\geq 0}\Phi_{kl\alpha\beta}(u)z^k\zeta^l\bar z^\alpha\bar\zeta^\beta=P(z,\zeta,\bar z,\bar\zeta)+\sum_{k,l,\alpha,\beta\geq 0}\Psi_{kl\alpha\beta}(u)z^k\zeta^l\bar z^\alpha\bar\zeta^\beta
\end{equation}
for the defining equation of $M$.

\subsection{Proof of Theorem 4}

\mbox{}

\medskip

\noindent{\bf Step I:  choice of a transverse curve.} We make a special choice of a smooth real-analytic curve $\gamma$ transverse at $0$ to the complex tangent. The choice is specified later in Step VIII.

\smallskip

\noindent{\bf Step II: removing pluriharmonic terms.} Next, we do a local biholomorphism at $0$ which eliminates the pluriharmonic terms in \eqref{def} (that is, we get $\Psi_{kl00}=0$) and, at the same time, straightens the curve $\gamma$, that is, $\gamma$ becomes 
\begin{equation}\Label{Gamma}
\Gamma=\{z=\zeta=0,\,\,\im w=0\}.
\end{equation}
The latter is possible due to e.g. \cite{ber},\cite{lmblowups}. In what follows, we consider only transformation preserving \eqref{Gamma}.    

\smallskip

\noindent{\bf Step III: cleaning the $2$-jet.} We perform a gauge transformation $$z\mapsto f(w)z,\quad \zeta\mapsto\zeta,\quad w\mapsto w$$ with an appropriate $f(w),\,f(0)=0,\,f'(0)=1$ in order to achieve $\Psi_{1010}(u)=0$ (we use $\Psi_{1010}(0)=0$). Such a transformation preserves the earlier normalization conditions. Let us provide details for this, very first, Chern-Moser type simplification. Consider the basic identity 
\begin{equation}\label{Basic}
\im h(z,\z,w)=\left.\Phi^*\bigl(f(z,\z,w),g(z,\z,w),\overline{f(z,\z,w)},\overline{g(z,\z,w)},\re h(z,\z,w)\bigr)\right|_{w=u+i\Phi(z,\z,\bar z,\bar\z,u)}
\end{equation}
($\Phi,\Phi^*$ are the source and the target defining functions, respectively).
Let $A(u)=\Phi_{1010}(u)$.  Then we set: $f(u):= \sqrt {A(u)}$. Then we pick the transformation: $$z \mapsto zf(w),\, w \mapsto w.$$ The curve \eqref{Gamma} is clearly preserved under such transformation. Let us then plug to the basic identity $\bar z=\bar\z=0$. Due to the absence of pluriharmonic terms in the initial defining function, we (for  $\bar z=\bar\z=0$ ) have $w=u$. Now the basic identity gives $0=\Phi^*(zf(u),0,\z,0,u)$, which gives  $\Phi^*(z,0,\z,0,u)=0$ so that the normalization achieved in Step II is preserved. Finally, to show that $\Psi^*_{1010}(u)=0$, we compare in the basic identity terms with $z\bar z u^l$, and get, by using the absence of pluriharmonic terms: $A(u)=f^2(u) (1+\Psi^*_{1010}(u))$, which implies the desired property in view of our choice of $A(u)$.

Further, we use the nondegeneracy of the term $z\bar z$ in \eqref{def} to eliminate terms $z\bar\zeta u^m$ in \eqref{def} by means of a transformation $$z\mapsto z+f(w)\zeta,\quad\zeta\mapsto\zeta,\quad w\mapsto w$$ with an appropriate $f(w)$. Such an $f(w)$ can be easily found if taking the basic identity \eqref{Basic} and picking in it terms $z\bar\z u^l$, which gives $\Psi_{1001}(u)=f(u)+\Psi^*_{1001}(u)$. We make sure that the earlier conditions are preserved, in a similar manner to the previous simplification. Geometrically, the transformation means straightening the Levi kernels along $\Gamma$ (alternatively, one can consider the $3$-dimensional variety constructed as the union of Levi curves through points of $\Gamma$ and then a holomorphic transformation, straightening the surface and the curves and preserving $\Gamma$). We end up with the additional normalization condition $\Psi_{1001}=0$ achieved. Furthermore, since the Levi rank along $\Gamma$ is constantly $1$, this also implies $\Psi_{0101}=0$.  Note that now the {\em only} term of degree $\leq 2$ in $z,\bar z,\zeta,\bar \zeta$  in $\Phi$ is $z\bar z$.

\smallskip

\noindent{\bf Step IV: cleaning the $3$-jet.} We first perform a gauge transformation
$$z\mapsto z,\quad \zeta\mapsto g(w)\zeta,\quad w\mapsto w$$ 
with an appropriate $g(w),\,g(0)=0,\,g'(0)=1$ in order to achieve $\Psi_{2001}(u)=0$ (we use $\Psi_{2001}(0)=0$). This simplification is very analogous to that in Step III. Next, 
we perform a transformation 
$$z\mapsto z+z^2f(w),\quad \zeta\mapsto \zeta+g(w)z, \quad w\mapsto w$$
with appropriate $f(w),g(w)$ to eliminate in \eqref{def} the term $\Psi_{2010}$ and at the same time $\Psi_{3001}$ (which is actually a part of the {\em $4$-jet} in $z,\bar z,\zeta,\bar \zeta$). To accomplish this, we first check
%(analogously to the above) 
that the previously achieved normalization conditions are preserved. For pluriharmonic terms, we use the same argument as above. Further, this transformation  affects neither two jets nor $\Psi_{2001}$, hence all previously achieved normalizations are preserved.  Next, we collect in the basic identity \eqref{Basic} terms with $z^2\bar zu^l$ and $z^3\bar\z u^l$ respectively and obtain the following transformation rules:
$$ \Psi_{2010} \mapsto \Psi_{2010}+f(u)+\frac{1}{2}\bar g(u)+\cdots, \quad \Psi_{3001}\mapsto \Psi_{3001}+f(u)+\cdots,$$
where dots stand for  expressions analytic in $u$ and polynomial in $f,\bar f,g,\bar g$, which  either have degree $\geq 2$ in $f,\bar f,g,\bar g$, or have degree $1$ in the above but then have a factor vanishing at $u=0$. The latter follows from the previously achieved normalization conditions (compare also with \eqref{e10}, \eqref{e22}). Now the desired choice of $f,g$ is accomplished by applying the implicit function theorem. 

We next perform a transformation of the kind $$z \mapsto z+f(w)\z^2, \quad \z\mapsto\z,\quad w\mapsto w.$$
By comparing the respective terms in the basic identity, we see, first of all, that all the previous normalization conditions are preserved, and that the transformation rule
$$ \Psi_{0210} \mapsto \Psi_{0210}+f(u)$$
holds. Now the normalization condition $\Psi_{0210}=0$ is accomplished by an appropriate choice of $f(w)$.

Finally, we remove in this step the $z\zeta\bar z$ term by a transformation   
$$z\mapsto z+z\zeta f(w),\quad \zeta\mapsto \zeta, \quad w\mapsto w$$ with an appropriate $f(w)$ (analogously to a similar transformation in Step III). We then get $\Psi_{1110}=0$. To make sure the previously achieved normalization conditions are preserved, we compare in the basic identity \eqref{Basic} the respective terms (and use the fact that the $2$-jet in $z,\z,\bar z,\bar\z$ is ``cleaned'' according to Step III). 

Note that, arguing identically to the proof of \autoref{goodperturb} (more precisely, using identities of the kind \eqref{iterative} arising from the uniform Levi-degeneracy), it is easy to conclude that the terms $\zeta^2\bar\zeta u^0$ and $z\zeta\bar\zeta u^0$ are not present now in \eqref{def}.  At the same time, we observe that one can shift the basic point $(0,0,0)$ to $(0,0,u_0),\,u_0\in\RR{}$ (the latter is still within the curve $\Gamma$, as in \eqref{Gamma}). Then the shifted function $\Psi$ must satisfy the same property, so that we get $\Psi_{0201}(u)=\Psi_{1011}(u)=0$. 

We end up with a hypersurface \eqref{def} for which {\em all}  terms of degree $\leq 3$ in $z,\bar z,\zeta,\bar \zeta$  in $\Psi$ vanish.  Further, repeating the above argument of using identities of the kind \eqref{iterative}, we get for the fourth order terms  $\Psi_{\bold k}$: 
\begin{equation}\Label{4ord}
(\Psi_{\bold k})_{\zeta\bar\zeta}=0, \quad |k|=4.
\end{equation}

\smallskip

\noindent{\bf Step V: choosing an orthonormal basis within the Levi kernel.} We now perform a transformation
$$z\mapsto ze^{i\varphi(w)},\quad \zeta\mapsto \zeta e^{2i\varphi(w)}+g(w)z^2,\quad w\mapsto w,$$
where $\varphi(w),g(w)$ satisfy $\varphi(0)=0,\,\varphi(\RR{})\subset\RR{},\,g(\RR{})\subset\RR{},$ in order to achieve simultaneously $\re\Psi_{3011}=0$ and $\Psi_{2020}=0$. Geometrically, the latter transformation fixes an ortonormal basis within the Levi kernel $\{\im w=0,\,z=0\}$ along the transverse curve $\Gamma$.  Indeed, for a hypersurface satisfying the previous normalization conditions, we consider the basic identity \eqref{Basic}, compare the $z^3\bar z \bar\z u^l,z^2\bar z^2 u^l$ terms and compute  (using, in particular, \eqref{4ord}) that the desired terms change as:
$$\re\Psi_{3011}(u)\mapsto \re\Psi_{3011}(u)-\frac{1}{2}\varphi'(u), \quad \Psi_{2020}(u)\mapsto \Psi_{2020}(u)-2\varphi'(u)+g(u).$$
This shows the desired choice of $\varphi,g$. We make sure, analogously to the above, that the previously achieved normalizations are preserved.

\smallskip

\noindent{\bf Step VI: removing terms\, $hol\cdot\bar z$\, and\, $hol\cdot\bar z^2$.}  For a hypersurface \eqref{def} satisfying the normalization conditions achieved in Steps I-V, we perform a Chern-Moser type  transformation of the kind 
\begin{equation}\Label{killk1}
z\mapsto z+f(z,\zeta,w),\,\,\zeta\mapsto\zeta,\,\,w\mapsto w,
\end{equation} 
where $f$ preserves the origin and has degree at least $3$ in $z,\zeta$.
If one denotes now the sum of all nenzero terms in $\Psi$ of the kind $z^k\zeta^l\bar z^1\bar\zeta^0 u^m$ with  $k+l\geq 3$ by $\chi(z,\zeta,u)\bar z$, then,  using the previous normalization conditions, we see that one can simply take $$f(z,\zeta,w):=\chi(z,\zeta,w)$$  
and obtain a hypersurface satisfying, in addition, $\Psi_{kl10}=0$. This is by collecting in the basic identity \eqref{Basic} the terms under consideration. Notably, the previously achieved normalizations {\em are} preserved: indeed, the $2$-jet in $z,\z,\bar z,\bar\z$ is ``cleaned'' according to Step III, and the function $f$ (which has in $z,\z$ only the linear term and terms of degree $\geq 3$), that is why $f$ contributes only to terms of degree $\geq 4$ in  $z,\z,\bar z,\bar\z$, and in degree $4$ only to the ones of the kind $hol\cdot\bar z$. This shows that the previous normalizations are preserved. 

Further, we perform a Chern-Moser type  transformation of the kind 
\begin{equation}\Label{killk2}
z\mapsto z,\,\,\zeta\mapsto\zeta+g(z,\zeta,w),\,\,w\mapsto w
\end{equation}
where $g$ preserves the origin and has degree at least $2$ in $z,\zeta$.
If one denotes then the sum of all nonzero terms in $\Psi$ of the kind $z^k\zeta^l\bar z^2\bar\zeta^0 u^m$ with  $k+l\geq 2$ by $\rho(z,\zeta,u)\bar z^2$, then a direct calculation shows that one can simply take $$g(z,\zeta,w):=\rho(z,\zeta,w)$$  
and obtain a hypersurface satisfying also $\Psi_{kl20}=0$ (we simply compare in the basic identity the respective terms, as above). We make sure, analogously to the previous simplification, that the other normalizations are preserved. This implies, in particular,
\begin{proposition}\Label{killed+}
For an analytic everywhere $2$-nondegenerate hypersurface, a transformation eliminating the terms \eqref{killed}, as suggested by  \cite{bk}, can be chosen to be holomorphic.
\end{proposition}

We now apply \autoref{goodperturb} and conclude that the hypersurface $M$, in particular, satisfies the condition \eqref{perturb}.  
Furthermore, collecting all the above information, we conclude that {\em all  terms of degree $\leq 4$ in $z,\bar z,\zeta,\bar \zeta$  in $\Psi$ vanish. }

\smallskip

We shall now discuss in a separate section the distinguished choice of a transverse curve mentioned in Step I.

\subsection{Chains in an everywhere $2$-nondegenerate hypersurface}

We are now aiming to specify a curve chosen in Step I. An appropriate choice of this curve leads to the additional normalization conditions $\Psi_{4001}=\Psi_{4011}=0$. A curve with this property is called {\em a chain}. The chains in our constructions arise as (the projections of) integral curves of an appropriate analytic direction field $d$ defined on a certain bundle over $M$. The direction field $d$, even though being {\em analytic}, is constructed by using a {\em formal normal form}. The idea of such an approach to the definition of chains and hence to the proof of convergence of a normal form is originally due to Zaitsev \cite{zaitsevnf} who applied it to revisiting the convergence problem for the classical Chern-Moser normal form. We address $d$ as {\em the chain field of an everywhere $2$-nondegenerate hypersurface}. The chain field  was also applied by Kossovskiy and Zaitsev \cite{generic,cmhyper} for proving the convergence of normal forms for finite type hypersurfaces. Here we develop the approach of Zaitsev in the uniformly Levi-degenerate case. 

The necessity to use a {\em bundle} over $M$ for constructing the chains is, first of all, due to the presence of the kernel of the Levi form, so that one has to consider the quotient $TM/K$ (where $K$ is the distribution of Levi kernels). Second, it is due to the ``large'' uncertainty in the choice of normal form arising from the action \eqref{factored} of the stability group of the light cone \eqref{cone} on the normal forms. Particularly the action of the component $\g_1$ of the automorphism algebra is essential here, as this algebra acts transitively on directions {\em in the quotient $TM/K$} transverse to $T^{\CC{}}M/K$.  This results in considering a bundle over $M$, fibers of which can be essentially identified with the above $\g_1$ component (the bundle $X$ below and its open dense subset $X^{\mathrm{o}}$).

Our precise construction is as follows.

Recall that the complex tangent bundle $T^{\CC{}}M$ is endowed with the canonical (holomorphically invariant) subbundle $K$, where $K_p$ is the Levi kernel for $M$ at $p$. Hence, we may consider the canonical (holomorphically invariant) bundle $TM / K$ over $M$ as well as its projectivization 
\begin{equation}\Label{pbundle}
X:=\mathbb P (TM/K).
\end{equation} 
$X$ can be interpreted as the projective bundle of directions $l$ in the fibers of $TM/K$. We will be also interested in the open subset $X^{\mathrm{o}}\subset X$ corresponding to the directions in the fibers of $TM/K$ transverse to the fibers of the subbundle $T^{\CC{}}M/K$. Let us fix the notation $M,M^*$ for the  source and the target manifolds, respectively.
%It naturally contains the complex subbundle $T^{\CC{}}M/K$. 
Then every biholomorphism $H:\,M\mapsto M^*$ naturally extends to one between $TM / K$ and  $TM^* / K^*$ and hence to a fiber-preserving biholomorphism of the projectivizations \eqref{pbundle} (and the respective open subsets $X^{\mathrm{o}}\subset X$ and $ (X^*)^{\mathrm{o}}\subset X^*$). Further, any smooth (unparameterized) curve $\gamma\subset M$ naturally lifts to a curve in the projective bundle $X$ (indeed, every parameterized curve naturally lifts to $TM$ by considering its tangent vector at a point,  hence it lifts naturally to $TM/K$, and so an unparameterized curve lifts in this way to the projectivization of $TM/K$).   In turn, if $M$ is in the form \eqref{germ},  the fiber of $TM/K$ at $0\in M$ can be identified with the real subspace in $\CC{3}$ given by $$\{z=z_1, w=u_1,\zeta=0\}_{z_1\in \CC{}, u_1\in \RR{}},$$ while $T_0M$ can be identified with the real subspace in $\CC{3}$ given by $$\{z=z_1, w=u_1,\zeta=\zeta_1\}_{z_1,\zeta_1\in \CC{}, u_1\in \RR{}}.$$ A direction in $T_0M/K_0$ transverse to $T^{\CC{}}_0M/K_0$ corresponds then to a (real) line $l = \{z_1 = au_1\}, a \in \CC{}$, i.e., we have the coordinate $a$ for the fiber of $X$ at $0$. The variable $a$ has then the meaning of the partial 1-jet $\frac{dz}{du}$ of a curve $z=z_1(u_1),w=u_1,\zeta=\zeta(u_1)$ at the point $0$ and we write $a_1$ for the partial 2-jet $\frac{d^2z}{d^2u}$ at the point $(0,a)$ for the coordinate of the tangent space to the fiber of $X$ at $0$. The collection 
\begin{equation}\label{tangcoord}
(z_1, \zeta_1, u_1, a_1)
\end{equation} 
as above can be considered as a tuple of coordinates for the tangent space to $X$ at a point lying in the fiber of $X$ at $0$. 

The bundles $TM / K$ and $X$ are very convenient for describing  transformations into a normal form $M^*$ geometrically. We consider now {\em biholomorphic} transformations to the normal form, while the formal case is discussed 
in \autoref{formalok} below. Namely, for $p\in M$ and for each direction $l_p\subset T_pM/K_p$ transverse to $T^{\CC{}}_pM/K_p$ (such a direction, once again, is seen as an element of the fiber of $X$ at $p$ lying also in $X^{\mathrm{o}}$), there is a normalizing transformation $H:\,(M,p)\mapsto (M^*,0)$ such that its extension to $X$ maps $(p,l_p)$ into $(0,L_0)$, where $L_0$ is the point in $X^*$ corresponding to the "vertical direction"  \eqref{Gamma}. To see this, let us first switch to coordinates \eqref{germ} (using a specific polynomial transformation, as described below). We can then restrict here to transformations which have the {\em identity differential} on the complex tangent $T^{\CC{}}_pM$. 
 Then a choice of $l_0$ corresponds to the action of the component $\mathfrak g_1$ of the stability algebra $\g$ of the light cone on the normal forms by the formula \eqref{factored}. Indeed, the linear part of a transformation from the flow  $\mathfrak g_1$ acts on the local coordinates as $z\mapsto z+aw,\,u\mapsto u$ in $T_0/K_0M$; in this way, any direction  in $T_0/K_0M$ corresponding  to a  line $l=\{z=au\},\,a\in\CC{}$ can be mapped into the vertical one. 
  Next, the subalgebras $\mathfrak g_0^c$ and $\mathfrak g_2$ from the isotropy algebra \eqref{isotropy} preserve the curve $\Gamma$. (The condition of having the identity differential on the complex tangent means that  the subalgebra $\mathfrak g_0^c$ does not act here in fact). Now \eqref{factord} implies the existence of a desired transformation. The above argument also shows that a choice of a direction $l_0$ is in correspondence with the complex number $a=f_w(0)$, where $H=(f,g,h)$ is a normalizing transformation with the identity differential on the complex tangent. 
  
 We further observe that the subalgebras $\mathfrak g_0^c$ and $\mathfrak g_2$ in fact preserve the condition of being in normal form (unlike the subalgebra $\g_1$!). To see this, for a vector field $X=f\dz+g\frac{\partial}{\partial\z}+h\dw$ from the span of the subalgebras $\mathfrak g_0^c$ and $\mathfrak g_2$ and a hypersurface $v=\Phi(z,\z,\bar z,\bar\z,u)$ in normal form, we write down the tangency condition 
 $$\im h=\re\bigl(2f\Phi_z+2g\Phi_{\z}+h\Phi_u\bigr),$$  
and see that the latter identity holds {\em modulo terms in the normal form space $\mathcal N$}, as in \eqref{nspace}. Treating now the flow of $X$ by exponentiating (e.g., \cite{ilyashenko}), we see that the flow of     $X$ maps the source hypersurface into a one in normal form, as required.
 
 We conclude from here that any two transformations $H_1,H_2$ bringing $M$ to a normal form and mapping the same direction $l_p$ into $L_0$ are related as 
\begin{equation}\Label{reltd}
H_2=\psi\circ H_1
\end{equation} 
($\psi$ is a transformation generated by the flows of $\mathfrak g_0^c$ and $\mathfrak g_2$).

Let us now fix $p\in M$ and a pair $(p,\bold v)\in X^{\mathrm{o}}$, and consider a  transformation $H:\,(M,p)\mapsto (M^*,0)$ bringing $M$ into a  normal form $M^*$ at $p$, and such that the induced map $\tilde H:\,X\mapsto X^*$ maps $(p,\bold v)$ into $(0,L_0)$. Let $\tilde\Gamma$ be the lifting of the curve $\Gamma$, as in \eqref{Gamma}, to the bundle $X^*$. Consider finally
\begin{equation}\Label{field}
d\tilde H^{-1}|_{(0,L_0)}(T_0\tilde\Gamma)\subset T_{(p,\bold v)}X.
\end{equation}
For a fixed $\tilde H$, \eqref{field} defines a direction in the tangent space $T_{(p,\bold v)}X$. We claim that the latter direction does {\em not} depend on the choice of $H$. Indeed, this follows from \eqref{reltd} and the fact that $\mathfrak g_0^c$ and $\mathfrak g_2$  preserve $\Gamma$, hence their extensions to $X$ preserve $T_0\tilde\Gamma$ (because both the natural extensions of a map and the curve are unique, as discussed above). 

The latter means that \eqref{field} defines a direction field in $X^{\mathrm{o}}$. 
\begin{definition}
We call the direction field $d$, as in \eqref{field}, {\em the chain field} of a uniformly $2$-nondegenerate hypersurface. 
\end{definition}
In turn, in any coordinates \eqref{germ}, we may consider normalizing transformations $H=(f,g,h)$ such that their extensions map $(0,\bold v)\in X^{\mathrm o}$ into $(0,L_0)$, and further restrict to transformations with the identity differential on the complex tangent $T^{\CC{}}_0M$. (The latter means the conditions $f_z(0)=g_\z(0)=h_w(0)=1$.) With such an arrangement, the value $d$ of the chain field at $(0,\bold v)$ is in correspondence with the complex numbers $f_{ww}(0),g_w(0)$.    (And, as discussed above, a choice of a direction $\bold v$ is in correspondence  the complex number $a=f_w(0)$). Indeed, choose coordinates for the tangent space of  $X$ at the reference point from the fiber of $0$ as in \eqref{tangcoord}. Now, when determining the direction field \eqref{field} at the reference point, we are concerned with the action of the differential of the inverse (prolonged) normalizing transformation on $T_0\widetilde{\Gamma}$ (which is spanned in local coordinates by $(0,1,0,0)$). With the above choice of local coordinates, the resulting direction is hence determined by $F_w(0),H_w(0),G_w(0),F_{ww}(0)$ (where $(F,G,H)$ are the components of the inverse transformation). We easily verify that $$H_w(0)=1/h_w(0)=1,\,G_w(0)=-g_w(0),\,F_w(0)=-f_w(0)=-a,\,F_{ww}(0)=-f_{ww}(0).$$ In this way, $f_{ww}(0),g_w(0)$ become {\em functions of $a$}. We will later show that these functions are in fact real-analytic. Furtheremore, in case the direction $\bold v$ is ``vertical'' (i.e. coincides with $L_0$), and the chain field $d$ is "vertical" too at the reference point (i.e. spanned by $T_0\widetilde{\Gamma}$), we get with our arrangements: $f_{ww}(0)=g_w(0)=0$ (and also $f_w(0)=a=0$). We will significantly use this fact in what follows. 

\begin{remark}\label{formalok} It is immediate from the construction of the chain vector field that, for each fixed $p\in M$, its value at any point $(p,\bold{v})$ in the fiber of $X$ at $p$ {\em depends only on the $2$-jet of an appropriate normalizing transformation}. The latter is well defined for merely {\em formal} transformations. In view of this, the definition of the chain field can be word-by-word extended to the situation when all the normalizing transformations are merely formal.   
\end{remark} 

 We next prove
\begin{proposition}\Label{analfield}
The direction field in $X^{\mathrm{o}}$ defined by \eqref{field} is in fact analytic.
\end{proposition} 
\begin{proof}
First, we note that the coordinates \eqref{germ} (actually achieved by a bi-cubic change of variables in $\CC{3}$) depend on the point $p$ analytically. This can be easily seen from the procedure in \cite{ebenfeltC3} and the implicit function theorem.

Second, as follows from \autoref{killed+}, terms \eqref{killed} can be removed by a holomorphic transformation (and hence the representation \eqref{perturb} can be achieved by the same holomorphic transformation). In turn, we may choose such transformation as a composition of Steps I to  VIII above (with an arbitrary choice of a transverse curve in Step I). The explicit procedure in Steps I to VIII combined with the implicit function theorem imply then that coordinates \eqref{perturb} depend on the point $p$ analytically as well.  

The latter means that, for proving the proposition, we have to prove the following claim: {\em for a hypersurface \eqref{perturb} and a (formal) normalizing transformation $H=(f,g,h)$ with $f_w(0)=a$, the functions $\chi(a):=f_{ww}(0),\,\tau(a):=g_w(0)$ are real-analytic in $a$}.  

The proof of the claim is a slight modification of the proof of a similar claim in e.g. \cite{generic}.  We note that, for a normalizing transformation $H=(f,g,h)$, the parameter $a$ is a part of the collection $H_j=(f_{j-1},g_{j-2},h_j)$ with $j=3$, while $\chi(a)$ is a part of the one with $j=5$ and $\tau(a)$ is a part of the one with $j=4$. The collections $H_4,H_5$ are obtained by solving the equations \eqref{homolog}, with the initial data corresponding to the element of the flow of $\mathfrak{g}_1$ in \eqref{isotropy}   with $f_w(0)=a$. In view of \eqref{homolog} and \autoref{directsum} (arguing by induction),  all the collections $H_j$ with $j\geq 3$ are obtained by solving a  system of linear equations  with nondegenerate (and fixed) matrix and the right hand side depending polynomially on $a,\bar a$. That is why, in particular, $H_4,H_5$ are polynomial in $a,\bar a$ and so are $\chi(a),\tau(a)$, as required. This proves the claim and the proposition. 
\end{proof}

%The precise choice of a curve $\gamma$ being transformed in Step I into \eqref{Gamma} is accomplished by the following lemma.
%\begin{lemma}\Label{choice}
%The curve $\gamma$ is step one can be chosen in such a way that, after the procedure in Steps I -- VII, one has $\Psi_{4001}=\Psi_{4011}=0$.
%\end{lemma}
%\begin{proof}[Proof of \autoref{choice}]
%The proof is similar, in many respects, to the proofs of analogous statements in e.g. \cite{chern}, \cite{generic}. 
%\end{proof}

We now integrate the direction field \eqref{field} and obtain a foliation of $X^{\mathrm o}$ by smooth real-analytic (unparameterized) curves $\tilde\gamma$.
%,each of which is pointwise transverse to $T_{(p,\bold v)}T^{\CC{}}M/K.$ 
This leads to the following
\begin{definition}\Label{degchain}
Canonical projections of the curves $\tilde\gamma$ from $X^{\mathrm o}$ to $M$ are called {\em chains}. 
\end{definition} 
As follows from the above procedure, through each point $p$ there is a unique chain in a fixed direction $l_p\subset T_pM/K_p$ transverse to $T^{\CC{}}_pM/K_p$. Furthermore, importantly, {\em the family of chains is biholomorphically invariant}, as follows from its definition. As follows, again, from the definition, {\em chains are mapped by normalizing transformations into the standard "vertical"\, curve \eqref{Gamma}}. 

We shall remark that, as discussed in the Introduction, orbits of the linear part of the stabilizer \eqref{isotropy} of the model do {\em not} act transitively on transverse directions anymore (unlike the Levi-nondegenerate situation). It is not difficult to compute that, for a hypersurface \eqref{germ}, the orbit of the "vertical"\, direction \eqref{Gamma} in $T_0M$ is the cone
\begin{equation}\Label{cancone}
\left\{(z,\z,u):\,\,\zeta u+iz^2=0,\,\,u\neq 0\right\}\subset T_0M.
\end{equation}
\begin{definition}
The cone \eqref{cancone} is called the {\em canonical cone} for $M$ at $0$.
\end{definition}
Clearly, the canonical cone at $p$ does not depend on the choice of coordinates \eqref{germ} and is furthermore biholomorphically invariant. That is, each everywhere $2$-nondegenerate hypersurface $M$  is equipped with a field of canonical cones in tangent spaces. 
Possible directions of chains form in the tangent space are {\em precisely} the directions from the canonical cone. 

\subsection{End of proof of Theorem 4} 

\mbox{}

\medskip

\noindent{\bf Step VII: applying the chain property.} We  are now able to specify Step I below. Namely, for a hypersurface \eqref{germ}, we choose $\gamma$ to be the unique chain in the "vertical" direction (i.e. the direction corresponding to the line \eqref{Gamma}). 

We finally have to prove that, with the above choice of $\gamma$, we have 
$\Psi_{4001}=\Psi_{4011}=0$ upon completion of Steps I -- VI. In view of the invariancy in $u$ of the pre-normal form achieved in Steps I to VI, it is enough to  prove $\Psi_{4001}(0)=\Psi_{4011}(0)=0$.  Consider a (formal) transformation $H=(f,g,h)$, as in \eqref{normalmap}, bringing $(M,0)$ into a normal form. By the definition of the chain, it satisfies: 
\begin{equation}\Label{ura}
f_{ww}(0)=0,\quad g_w(0)=0.
\end{equation} 
In view of that and the outcome of Step VI, when solving the equations \eqref{homolog} for the map $H$, for $j=3$ we get the same result as for the identity map. For $j=4$, it is straightforward to see from the basic identity  \eqref{basic}, the outcome of Step VIII and the fact that $H$ coincides with the identity map to weight $3$ that the weight $4$ identity gives {\em precisely} the same equations for the collection $H_4=(f_3,g_2,h_4)$ as in the formal procedure in \autoref{directsum}, besides the only equation for   the $z^4\bar\zeta$ terms (analogous to \eqref{e24}) which gives $\Psi_{4001}(0)$ in the right hand side instead of $0$. Then, by using the second condition in \eqref{ura}, we conclude that $\Psi_{4001}(0)=0$. Very similarly, when considering $j=5$ and the $z^4\bar z\bar\zeta$ terms, we use the first condition in \eqref{ura} and obtain $\Psi_{4011}(0)=0$.

As a result, we end up with a hypersurface satisfying, in addition, $\Psi_{4001}=\Psi_{4011}=0$.

\smallskip

\noindent{\bf Step VIII: choice of a parameterization along the chain.} The last remaining conditions for $\Psi$ to belong to the normal form space are $\im\Psi_{3011}=0$ and  $\Psi_{3030}=0$. 

 We achieve the desired conditions in two sub-steps. We first perform a transformation
\begin{equation}\label{last}
z\mapsto f(w)z,\,\,\zeta\mapsto\zeta+ig(w)z^2,\,\,w\mapsto h(w),
\end{equation}
where $$f(0)=1,\,\,h(0)=h''(0)=0,\,\,h'(w)=f^2(w),\,\,f(\RR{})\subset\RR{},\,\,g(\RR{})\subset\RR{}.$$

We then compare the respective terms in the basic identity \eqref{Basic} and compute (employing also the previously achieved normalization conditions) that: 
\begin{equation}\Label{comptd}
\Psi_{3011}\mapsto \im\Psi_{3011}h'-\frac{1}{2}h''+\frac{1}{2}gh',\quad \Psi_{3030}\mapsto h'\Psi_{3030}+\left(\frac{1}{6}h'''-\frac{1}{2}\frac{h''^2}{h'}\right)-\frac{1}{2}g'(u).
\end{equation}
Now the conditions $\im\Psi_{3011}=\Psi_{3030}=0$ turn \eqref{comptd} into a system of analytic nonsingular ODEs, which we solve uniquely with the Cauchy data $h(0)=0,\,h'(0)=1,\,h''(0)=0$. (To see this, one has to solve the first equation for $g$ and substitute the result into the second, which makes  the second equation a nonsingular third order ODE in $h$; the latter is solved uniquely with the above initial data, and then $g$ is found from the substitution). 

We now have to take care of the previously achieved normalization conditions. By comparing all the normal form terms in the basic identity \eqref{Basic}, we see that {\em almost} all of them are preserved, with the sole exception of terms of the conditions $\Psi_{jl20}$ with $j+l\geq 5$ (i.e. nonzero terms $z^j\z^l\bar z^2u^m$ with $j+l\geq 5$  can appear after the transformation \eqref{last}; they "arrive" from terms of the kind $z^j\z^l\bar\z u^m$ with $j+l\geq 5$). The restriction $j+l\geq 5$ comes from the identity \eqref{4ord} and the analogous identity with $|k|=5$ applicable for a hypersurface obtained after Step VII.   We thus have to perform a "final cleaning transformation"
$$z\mapsto z, \quad \z \mapsto \z +g(z,\z,w)z^2, \quad w \mapsto w,$$
where $g$ is an appropriate function {\em with vanishing $4$-jet in $z,\z$ at the origin}. Applying now the basic identity \eqref{Basic} and considering all the normal form terms, we see that only terms of total degree $\geq 7$ in $z,\z,\bar z,\bar\z$ are touched by such transformation,  while pluriharmonic terms and terms $hol\cdot\bar z$  still vanish.  Very similarly to Step VI, we see that an appropriate choice of the function $g$ makes all the terms $z^j\z^l\bar z^2u^m$ in the new defining function $\Psi$ vanish, as desired. Now {\em all} the normal form conditions are finally achieved.      

We remark that an important part of Step VIII is a choice of a parameterization along a chain, very analogously to the situation of Chern-Moser.

This completely proves the theorem. 

\end{proof}

\autoref{converge} immediately implies \autoref{main}.

\subsection{Proofs of further results.} 
As mentioned above, \autoref{main} allows for  important applications stated in \autoref{main2} and \autoref{main3}. We give the proofs for these theorems below. 

\begin{proof}[Proof of \autoref{main2}]
According to \cite{pocchiola}, the local equivalence of an everywhere $2$-nondegenerate hypersurface to the model \eqref{cone} amounts to vanishing of the two basic invariants $W$ and $J$ (given by lengthy expressions in terms of the defining function and the CR-vector fields, which we do not provide here). It is straightforward to check then that, for a hypersurface in normal form \eqref{nspace}, the invariant $W|_0$ is proportional to the normal form coefficient $\Psi_{3002}(0)$, and, as long as   $\Psi_{3002}(0)$ vanishes, the invariant $J|_0$ is proportional to the normal form coefficient $\Psi_{5001}(0)$. This immediately implies the assertion of the theorem.
\end{proof}

\begin{proof}[Proof of \autoref{main3}]
For a hypersurface \eqref{model+} in normal form \eqref{nspace}, we argue as in the proof of \autoref{goodperturb} and consider its complex defining equation $w=\theta(z,\z,\bar z,\bar\z,\bar w)$. The uniform $2$-nondgeneracy of $M$ then gives the determinant equation 
\begin{equation}\label{deteqn}
\begin{vmatrix}
\theta_{\bar z} & \theta_{\bar \zz} & \theta_{\bar w} \\
\theta_{z\bar z} & \theta_{z\bar \zz} & \theta_{z\bar w} \\
\theta_{\zz\bar z} & \theta_{\zz\bar \zz} & \theta_{\zz\bar w} 
\end{vmatrix}=0.
\end{equation}
We need now to express the latter equation as a PDE for the function $\Phi$, as in \eqref{model+}. For doing so, we have to establish relations between the $2$-jets of the real defining function $\varphi(z,\z,\bar z,\bar\z,u):=P(z,\z,\bar z,\bar\z)+\Phi(z,\z,\bar z,\bar\z,u)$ and the complex defining function $\theta$. Considering the identity 
$$\frac{1}{2i}(\theta-\bar w)=\varphi\left(z,\zeta,\bar z,\bar\z,\frac{1}{2}(\theta+\bar w)\right)$$
as an identity in $(z,\zeta,\bar z,\bar\z,\bar w)$, we differentiate the latter once in each of the variables and easily conclude:
\begin{equation}\label{1jet}
\theta_{\bar z}=\frac{2i\varphi_{\bar z}}{1-i\varphi_u},\,\,\theta_{\bar \z}=\frac{2i\varphi_{\bar \z}}{1-i\varphi_u},\,\,\theta_{\bar w}=\frac{1}{1-i\varphi_u},\,\,
\end{equation}
Differentiating then once more, we get:
\begin{equation}\label{2jet}
\begin{aligned}
&\theta_{z\bar z}=2i\varphi_{z\bar z}+\cdots,\,\,\theta_{z\bar \z}=2i\varphi_{z\bar \z}+\cdots,\,\,\theta_{z\bar w}=i\varphi_{zu}+\cdots,\\
&\theta_{\z\bar z}=2i\varphi_{\z\bar z}+\cdots,\,\,\theta_{\z\bar \z}=2i\varphi_{\z\bar \z}+\cdots,\,\,\theta_{\z\bar w}=i\varphi_{\z u}+\cdots,
\end{aligned}
\end{equation}
where dots stand for terms of degree $\geq 2$ in the first and second order derivatives of $\varphi$. It is not difficult to see then, by using \eqref{cone}, that the relations \eqref{1jet},\eqref{2jet} turn \eqref{deteqn} into a PDE of the kind:
\begin{equation}\label{thePDE}
\Phi_{\z\bar\z}=T\bigl(z,\bar z,\z,\bar\z,\Phi_{\bar z},\Phi_{\bar\z},\Phi_u,\Phi_{z\bar z},\Phi_{z\bar \z},\Phi_{zu},\Phi_{\z\bar z},\Phi_{\z u}\bigr),
\end{equation}
where $T$ is a {\em universal} (i.e. independent on $M$) rational function in all its variables with $T(0)=0,\,dT(0)=0$ (the function $T$ can be deduced explicitly, even though the resulting expression is cumbersome and we do not provide it here). The PDE \eqref{thePDE} with the initial data on the ``cross'' $\z\bar\z=0$ is a particular case of the classical {\em Goursat problem}. We now make use of the result of Lednev (see \cite{lednev},\cite{wagschal},\cite{wagschal2}) suggesting, in particular, that a PDE of the kind  \eqref{thePDE} with analytic at the origin right hand side satisfying $T(0)=0,\,dT(0)=0$ has a unique analytic at the origin solution with $\Phi(z,\zeta,\bar z,0,u)=\Phi(z,0,\bar z,\bar\z,u)=0$. (Note that, as long as $\Phi$ satisfies the reality condition, the second condition automatically follows from the first one). To apply the result of Lednev for \eqref{thePDE} with any analytic at the origin initial data 
\begin{equation}\label{data}
\Phi(z,\zeta,\bar z,0,u)=:\chi(z,\z,\bar z,u)\in\mathcal D,
\end{equation}   
we first shift $\Phi$ by substructing from it the associated real-analytic expression \eqref{disting}, and switch to the modified PDE. We now note that, since functions in $\mathcal D$ all have vanishing $4$-jet at the origin, the modified PDE (which shall be already considered as one with the zero initial data, as required in Lednev's theorem) also has vanishing linear part at the origin. Now Lednev's theorem implies that the PDE \eqref{thePDE} has a unique analytic near the origin solution for any analytic at the origin initial data \eqref{data}. To see that the solution $\Phi$ belongs to the normal space \eqref{nspace}, we note that all possible monomials in \eqref{nspace} vanishing of which determines the normal form space all have either $\z$ or $\bar\z$ absent. But monomials of the latter kind which are present in $\Phi$ (or their conjugated) must belong to  $\mathcal D$ due to the initial data \eqref{data}, while monomials from $\mathcal D$ satisfy the normal form conditions by definition of $\mathcal D$. Thus $\Phi$ belongs to the normal form space.

 Finally, it is not difficult to see that the PDE \eqref{thePDE} is real, that is, it is a real-analytic (rational) PDE for a scalar real function $\Phi$ in the variables $(x,y,s,t,u),\,z=x+iy,\,\z=s+it$ (it can be seen from its invariance under the involution $z\leftrightarrow\bar z,\,\z\leftrightarrow\bar \z$). The reality condition in the space $\mathcal D$ means that the initial data under consideration amounts to a real-analytic initial data on the ``cross''\, $st=0$ which is compatible on the intersection $s=t=0$. The latter means that the solution satisfies the reality condition $\Phi(z,\z,\bar z,\bar\z,u)\in\RR{}$. This immediately implies the assertion of the theorem.

\end{proof}

\end{document}